\begin{document}

\newcommand{\B}{\mathbf{B}}
\newcommand{\U}{\mathbf{U}}
\newcommand{\T}{\mathbf{T}}
\newcommand{\G}{\mathbf{G}}
\newcommand{\Para}{\mathbf{P}}
\newcommand{\Levi}{\mathbf{L}}

\newcommand{\Gtilde}{\mathbf{\tilde{G}}}
\newcommand{\Ttilde}{\mathbf{\tilde{T}}}
\newcommand{\Btilde}{\mathbf{\tilde{B}}}

\newcommand{\N}{\operatorname{N}}
\newcommand{\Z}{\operatorname{Z}}
\newcommand{\Gal}{\operatorname{Gal}}
\newcommand{\Kerel}{\operatorname{ker}}
\newcommand{\Irr}{\operatorname{Irr}}
\newcommand{\D}{\operatorname{D}}
\newcommand{\I}{\operatorname{I}}
\newcommand{\GL}{\operatorname{GL}}
\newcommand{\SL}{\operatorname{SL}}
\newcommand{\W}{\operatorname{W}}
\newcommand{\R}{\operatorname{R}}

\theoremstyle{remark}

\theoremstyle{definition}
\newtheorem{definition}{Definition}[section]
\newtheorem{construction}[definition]{Construction}
\newtheorem{remark}[definition]{Remark}
\newtheorem{example}[definition]{Example}
\newtheorem{notation}[definition]{Notation}

\theoremstyle{plain}
\newtheorem{theorem}[definition]{Theorem}
\newtheorem{lemma}[definition]{Lemma}
\newtheorem{proposition}[definition]{Proposition}
\newtheorem{corollary}[definition]{Corollary}
\newtheorem{conjecture}[definition]{Conjecture}
\newtheorem{assumption}[definition]{Assumption}
\newtheorem{main theorem}[definition]{Main Theorem}

\title{The Navarro refinement of the McKay conjecture for finite groups of Lie type in defining characteristic}
\date{\today}
\author{Lucas Ruhstorfer}
\address{Fachbereich Mathematik, TU Kaiserslautern, 67653 Kaiserslautern, Germany}
\email{ruhstorfer@mathematik.uni-kl.de}
\keywords{McKay conjecture, groups of Lie type}

\subjclass[2010]{20C33}

\begin{abstract}
In this paper we verify Navarro's refinement of the McKay conjecture for quasi-simple groups of Lie type in their defining characteristic. Navarro's refinement takes into account the action of specific Galois automorphisms on the characters present in the McKay conjecture \cite{Navarro}. Our proof of this case of the conjecture relies on a character correspondence constructed by Maslowski in \cite{Maslowski2010}. Building on this we verify the inductive condition for Navarro's refinement from \cite{NSV} for most groups of Lie type in defining characteristic. 
\end{abstract}

\maketitle

\section{Introduction}

For a finite group $G$, a prime $p$ and a Sylow $p$-subgroup $P$ of $G$ the McKay conjecture asserts that there exists a bijection between the set of $p'$-degree characters $\Irr_{p'}(\N_G(P))$ of $\N_G(P)$ and the set of $p'$-degree characters $\Irr_{p'}(G)$ of $G$.

However, Navarro suggests that there should exist a bijection between these sets of characters which is compatible with certain Galois automorphisms. Denote by $\mathbb{Q}^{\mathrm{ab}}$ the field generated by all roots of unity in $\mathbb{C}$. Let $\mathcal{H}$ be the subgroup of $\mathcal{G}:=\mathrm{Gal}(\mathbb{Q}^{\mathrm{ab}} / \mathbb{Q})$ consisting of all $\sigma\in \mathcal{G}$ for which there exists an integer $e$ such that $\sigma$ sends any $p'$-root of unity $\zeta$ to $\zeta^{p^e}$. He then proposes the following refinement of the McKay conjecture, see \cite[Conjecture A]{Navarro}.

\begin{conjecture}\label{Navarro}
There exists an $\mathcal{H}$-equivariant bijection between $\Irr_{p'}(\N_G(P))$ and $\Irr_{p'}(G)$.
\end{conjecture}

This conjecture implies several character-theoretical consequences. One of them was proved by Navarro, Tiep and Turull, see \cite[Theorem A]{Turull} and another recently by Schaeffer Fry, see \cite{SchaefferFry}.

It therefore seems important to study and verify Conjecture \ref{Navarro} for as many families of finite groups as possible. We contribute to this program by proving the following theorem.

\begin{theorem}\label{maintheorem}
Let $\G$ be a simple algebraic group of simply connected type defined over an algebraic closure of $\mathbb{F}_p$ and $F: \G \to \G$ a Frobenius endomorphism. Suppose that $(\G,F)$ is not contained in the table of Theorem \ref{bijection} below. Then there exists an $\mathcal{H}$-equivariant bijection
	\begin{align*}
	f:\Irr_{p'}(\mathrm{N}_{\G^F}(P) )\to \Irr_{p'}(\G^F),
	\end{align*}
where $P$ is a Sylow $p$-subgroup of $\G^F$.
\end{theorem}

The proof of Theorem \ref{maintheorem} is based on Maslowski's work on the inductive McKay condition for simple groups of Lie type in defining characteristic. He constructs a certain automorphism-equivariant bijection for the $p'$-characters of the universal covering group of a finite simple group of Lie type defined over a field of characteristic $p$.

For the original McKay conjecture a reduction theorem was proved by Isaacs, Malle and Navarro, see \cite[Theorem B]{Isaacs}. Recently, Navarro, Späth and Vallejo \cite{NSV} were able to provide a reduction theorem for Navarro's refinement of the McKay conjecture. Their theorem asserts that Conjecture \ref{Navarro} holds for all finite groups and the prime $p$ if all nonabelian simple groups satisfy the so-called inductive Galois--McKay condition for the prime $p$. We show here that the inductive Galois--McKay condition holds for many groups of Lie type in defining characteristic. 

\begin{theorem}\label{inductive}
Suppose that $(\G,F)$ satisfies Assumption \ref{spaethassumption}. If $S:=\G^F/ \mathrm{Z}(\G^F)$ is a simple non-abelian group and $\G^F$ is its universal covering group then the inductive Galois--McKay condition from \cite[Definition 3.1]{NSV} holds for the group $S$ and the prime $p$.
\end{theorem}

To prove this theorem, we show that the bijection constructed in Theorem \ref{maintheorem} is suitable for the inductive Galois--McKay condition. In order to do this, we compute the stabilizers of $p'$-characters under the simultaneous action of Galois automorphisms and group automorphisms. Then we use the theory of Gelfand--Graev characters for disconnected reductive groups to explicitly compute extensions of these characters to certain almost simple groups. The results obtained here may be of independent interest since they give information about character values of characters of almost simple groups.
%

The structure of our paper is as follows. In Section \ref{notation} we introduce some notation. In Section \ref{rep} we recall some basic facts about the representation theory of finite groups of Lie type. We discuss the action of Galois automorphisms on Lusztig series and recall a description of irreducible $p'$-characters originally due to Green, Lehrer and Lusztig. In Section \ref{Maslowski} we recall the McKay bijection due to Maslowski\cite{Maslowski2010}. In Section \ref{section5} we use the results of the two previous sections to prove Theorem \ref{maintheorem}. Section \ref{section6} and \ref{section7} are then devoted to the proof Theorem \ref{inductive}.

\section*{Acknowledgement}

This paper originates from the results of the author's master thesis \cite{Ruhstorfer} at the Technische Universität Kaiserslautern. I would like to express my gratitude to my supervisor Gunter Malle for his useful remarks and comments during the development of my thesis. I thank Britta Späth for suggesting this topic and for fruitful discussions.

The author would like to thank the Isaac Newton Institute for Mathematical Sciences for support and hospitality during the programme Groups, representations and applications: new perspectives when work on this paper was undertaken. This work was supported by: EPSRC grant number EP/R014604/1.

\section{Notation}\label{notation}

\subsection{Rings and fields}\label{rings}

For an integer $m$ we denote by $\mathbb{Q}_m$ the $m$-th cyclotomic field.
Let $p$ be a prime and $q$ an integral power of $p$. We let $\mathbf{k}$ be an algebraic closure of $\mathbb{F}_p$. Let $\ell$ be a prime different from $p$ and denote by $K$ an algebraic closure of the field of $\ell$-adic numbers. Denote by $(\mathbb{Q}/\mathbb{Z})_{p'}$ the subgroup of elements of the abelian group $\mathbb{Q}/\mathbb{Z}$ whose order is not divisible by $p$. We fix once and for all an isomorphism $\mathbf{k}^\times \to (\mathbb{Q}/\mathbb{Z})_{p'}$ and an injective morphism $\mathbf{k}^\times \hookrightarrow K^\times$.

\subsection{Characters of finite groups}

If $Y$ is a finite group we denote by $\Irr(Y)$ the set of irreducible $K$-valued characters of $Y$. For $K$-valued characters we mostly follow the notation of \cite{Isaacs2013}. Let us briefly state the main deviations from the notation in \cite{Isaacs2013}. If $X$ is a normal subgroup of $Y$ and $\vartheta \in \Irr(X)$ we denote by $Y_\vartheta$ the inertia group of $\vartheta$ in $Y$. Moreover, we denote by $\Irr(Y \mid \vartheta)$ the set of irreducible characters of $Y$ which occur as constituents of the induced character $\vartheta^Y$. We say that characters in the set $\Irr(Y \mid \vartheta)$ lie above $\vartheta$. Similarly, if $\chi \in \Irr(Y)$ we mean by $\Irr(X \mid \chi)$ the set of irreducible characters of $X$ occurring as constituents of the restriction $\chi_X$. Such characters are said to lie below $\chi$.

\subsection{Finite groups and Galois automorphisms}

Let $Y$ be a finite group.
By Brauer's theorem the Galois group $\mathcal{G}= \mathrm{Gal}(\mathbb{Q}^{\mathrm{ab}} / \mathbb{Q})$ acts on the set of irreducible characters $\Irr(Y)$. Following the notation of \cite{NSV} for a Galois automorphism $\sigma \in \mathcal{G}$ and a generalized character $\chi \in \mathbb{Z} \Irr(Y)$ we let $\chi^\sigma \in \mathbb{Z} \Irr(Y)$ be the generalized character defined by $\chi^\sigma(y)=\sigma(\chi(y))$, for $y\in Y$. This defines indeed a group action of $\mathcal{G}$ on $\Irr(G)$ since $\mathcal{G}$ is abelian.
 Furthermore, $\chi^{\mathcal{H}}$ denotes the $\mathcal{H}$-orbit $\{ \chi^\sigma \mid \sigma \in \mathcal{H} \}$ of the character $\chi$.

\section{Representation theory of groups of Lie type}\label{rep}

\subsection{Lusztig series and Galois automorphisms}\label{sub}

Let $\G$ be a connected reductive group defined over $\mathbb{F}_q$ via a Frobenius endomorphism $F:\G \to \G$. We fix an $F$-stable maximal torus $\T$ of $\G$ contained in an $F$-stable Borel subgroup $\B$. Let $\U$ be the unipotent radical of $\B$. We denote by $\Phi$ the root system of $\G$ with respect to the torus $\T$ and by $\Delta=\{\alpha_1,\dots,\alpha_n\}$ the set of simple roots of $\Phi$ with respect to $\T \subseteq \B$. We let $\Phi^+$ be the set of positive roots and $\Phi^\vee$ the set of coroots.

Fix a triple $(\G^*,\T^*,F^*)$ in duality with $(\G,\T,F)$ as in \cite[Definition 13.10]{Digne1991}. This together with the choices made in \ref{rings} gives rise to a bijection between the set of $\G^F$-conjugacy classes of pairs $(\mathbf{S},\theta)$ where $\mathbf{S}$ is an $F$-stable maximal torus of $\G$ and $\theta\in\Irr(\mathbf{S}^F)$ with the set of $(\G^*)^{F^*}$-conjugacy classes of pairs $(\mathbf{S}^*,s)$ where $s\in (\G^*)^{F^*}$ is a semisimple element and $\mathbf{S}^*$ is an $F^*$-stable maximal torus with $s\in \mathbf{S}^*$, see \cite[Proposition 13.13]{Digne1991}.



If $s\in ( \G^\ast)^{F^\ast}$ is a semisimple element we denote by $(s)$ its $ ( \G^\ast)^{F^\ast}$-conjugacy class. We denote by $\mathcal{E}(\G^F,(s)) \subseteq \Irr(\G^F)$ its rational Lusztig series. We have the following lemma, see the proof of \cite[Lemma 9.1]{Tiep}.
 
\begin{lemma}\label{sigmadl} Let $\sigma\in \mathcal{G}$ such that and $\sigma(\zeta)=\zeta^k$ for a primitive $|\G^F|$-th root of unity $\zeta \in K$. Then we have $\sigma(\mathcal{E}(\G^F,(s)))=\mathcal{E}(\G^F,(s^k))$.
\end{lemma}



\subsection{Gelfand--Graev characters}\label{Gelfandsection}

In order to introduce the Gelfand--Graev characters of $\G^F$ we proceed as in the proof of \cite[Theorem 2.4]{Digne92}. The Frobenius endomorphism $F$ of $\G$ induces an automorphism $\gamma$ of the character group $X(\T)$. Since $\T$ is a maximally split torus it follows by \cite[Proposition 22.2]{Malle2011} that $\gamma$ stabilizes the set of positive roots $\Phi^+$ and the set of simple roots $\Delta$. Hence, $\gamma$ acts on the index set of $\Delta= \{ \alpha_1, \ldots, \alpha_n \}$ which yields a partition \begin{align*} \{1, \dots, n\} =A_1 \cup \dots \cup A_r\end{align*}
of the index set of $\Delta$ into its $\gamma$-orbits. For each $A_i$ we fix a representative $a_i\in A_i$. If $\alpha\in \Phi$ we let $\U_{\alpha}$ be the root subgroup of $\G$ associated to the root $\alpha\in \Phi$. We denote by $\U_{A_i}$, $i=1,\dots,r$, the product in $\U/ [\U,\U]$ of the root subgroups $\U_{\alpha}$, $\alpha_j$, $j\in A_i$. By \cite[Lemma 2.2]{Digne92} we have
\begin{align*}
\U^F / {[\U,\U]}^F \cong \prod_{i=1}^{r} \U_{A_i}^F.
\end{align*}
For each $\alpha\in\Phi$ there is an isomorphism $x_\alpha:(\mathbf{k},+)\to \U_\alpha$ with $F(x_\alpha(a))=x_{\gamma (\alpha)}(a^q)$ for all $a\in \mathbf{k}$ and all $\alpha\in\Phi$. These maps induce an isomorphism $x_i:(\mathbb{F}_{q^{|A_i|}},+)\to\U_{A_i}^F$ given by
\begin{align*} x_i(a)=\prod_{k=0}^{|A_i|-1} x_{\gamma^k \alpha_{a_i} }(a^{q^k}).\end{align*}
Now fix a character $\phi_0\in\Irr \left( (\mathbb{F}_{q^{N}},+) \right)$, where $N=\mathrm{lcm}(|A_1|, \dots, |A_r|)$, such that the restriction of $\phi_0$ to $({\mathbb{F}_q},+)$ is nontrivial. Then any character $\psi\in\Irr(\U_{A_i}^F)$ is given by $\psi(x_i(a))=\phi_0(c_i a)$ for all $a\in\mathbb{F}_{q^{|A_i|}}$ and some $c_i\in\mathbb{F}_{q^{|A_i|}}$. Any irreducible character $\phi\in\Irr \left((\U / [\U,\U])^F \right)$ is of the form $\phi=\displaystyle\prod_{i=1}^{r} \phi_i$ for some characters $\phi_i \in \Irr(\mathbf{U}_{A_i}^F)$ and so we obtain the following.

\begin{lemma}\label{characterU} The map $\delta:\Irr \left((\U / [\U,\U])^F \right)\to \displaystyle\prod_{i=1}^{r} \mathbb{F}_{q^{|A_i|}}$
given by 
$$\phi=\displaystyle\prod_{i=1}^{r} \phi_i \mapsto (c_1, \ldots ,c_r),$$
where the $c_i\in\mathbb{F}_{q^{|A_i|}}$ are such that $\phi_i(x_i(a))=\phi_0(c_i a)$ for all $a\in\mathbb{F}_{q^{|A_i|}}$ is a bijection.
\end{lemma}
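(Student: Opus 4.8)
The plan is to reduce the statement to a componentwise assertion about additive characters of a finite field and then to exploit the nondegeneracy of the pairing $(c,a)\mapsto\phi_0(ca)$.

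First I would use the isomorphism $\U^F/[\U,\U]^F\cong\prod_{i=1}^r\U_{A_i}^F$ from \cite[Lemma 2.2]{Digne92}, which induces a bijection $\Irr((\U/[\U,\U])^F)\cong\prod_{i=1}^r\Irr(\U_{A_i}^F)$ sending $\phi=\prod_i\phi_i$ to $(\phi_1,\dots,\phi_r)$. Pulling each $\phi_i$ back along the group isomorphism $x_i\colon(\mathbb{F}_{q^{|A_i|}},+)\to\U_{A_i}^F$, it suffices to prove that for each $i$, writing $Q:=q^{|A_i|}$, the map
\begin{align*}
\gamma_i\colon(\mathbb{F}_Q,+)\to\Irr((\mathbb{F}_Q,+)),\qquad c\mapsto\bigl(a\mapsto\phi_0(ca)\bigr)
\end{align*}
is a bijection; then $\gamma$ is the product of the $\gamma_i$ under these identifications. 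Note that the condition $|A_i|\mid N$ guarantees $\mathbb{F}_Q\subseteq\mathbb{F}_{q^N}$, so that $\phi_0(ca)$ is defined, and that $a\mapsto\phi_0(ca)$ is indeed a character of $(\mathbb{F}_Q,+)$ because $\phi_0$ is one.

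Second, I would check that $\gamma_i$ is a homomorphism of abelian groups: since $\phi_0$ is a character, $\phi_0((c+c')a)=\phi_0(ca)\phi_0(c'a)$, whence $\gamma_i(c+c')=\gamma_i(c)\gamma_i(c')$. As $|\mathbb{F}_Q|=Q=|\Irr((\mathbb{F}_Q,+))|$, it is enough to show $\gamma_i$ is injective. Suppose $c\in\mathbb{F}_Q$ lies in the kernel, i.e. $\phi_0(ca)=1$ for all $a\in\mathbb{F}_Q$. If $c\ne 0$ then $a\mapsto ca$ permutes $\mathbb{F}_Q$, so $\phi_0$ restricts trivially to $\mathbb{F}_Q$; but $\mathbb{F}_q$ is a subgroup of $\mathbb{F}_Q$ and $\phi_0|_{(\mathbb{F}_q,+)}$ is nontrivial by hypothesis, a contradiction. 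Hence $c=0$, so $\gamma_i$ is injective and therefore bijective. Taking products over $i$ shows $\gamma$ is a bijection; in particular the scalars $c_i$ attached to a given $\phi$ are uniquely determined, so $\gamma$ is well defined.

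The argument is entirely elementary, and there is no serious obstacle. The only point deserving attention is to confirm that the hypotheses built into the construction of $\phi_0$ — that $|A_i|\mid N$ for all $i$ and that $\phi_0$ is nontrivial on $(\mathbb{F}_q,+)$ — are exactly what forces the pairing $(c,a)\mapsto\phi_0(ca)$ on $\mathbb{F}_{q^{|A_i|}}$ to be nondegenerate, which is precisely the input used in the kernel computation above.
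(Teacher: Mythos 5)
Your argument is correct: reducing via $\U^F/[\U,\U]^F\cong\prod_i\U_{A_i}^F$ and the isomorphisms $x_i$, and then using that $c\mapsto(a\mapsto\phi_0(ca))$ is an injective homomorphism (by the nontriviality of $\phi_0$ on $(\mathbb{F}_q,+)$ and $\mathbb{F}_q\subseteq\mathbb{F}_{q^{|A_i|}}\subseteq\mathbb{F}_{q^N}$) between groups of equal order, is exactly the standard nondegeneracy argument. The paper in fact states this lemma without proof, treating it as implicit in the preceding discussion, and your write-up supplies precisely that omitted verification, including the uniqueness of the $c_i$ needed for $\gamma$ to be well defined.
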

Let $\iota:\G \hookrightarrow \Gtilde$ be an extension of $\G$ by a central torus such that $\Gtilde$ is a connected reductive group with connected center. Let $\xi=\delta^{-1}(1,\ldots,1)$ or more concretely,
\begin{align*}
\xi=\prod_{i=1}^{r} \phi_i \text{, with } \phi_i(x_i(a))=\phi_0(a) \text{ for } a\in\mathbb{F}_{q^{|A_i|}}.
\end{align*}
Denote by $\Gamma_1$ the induced character $\xi^{\G^F}$. We have a natural isomorphism $H^1(F,\Z(\G)) \cong \Gtilde^F /\G^F \Z(\Gtilde)^F$ by \cite[Corollary 1.2]{Lehrer1978} and \cite[Proposition 1.5]{Lehrer1978}. For $z\in H^1(F,\Z(\G))$ we take a representative $g_z \in \Gtilde^F$ and define the Gelfand--Graev character associated to the class $z$ by $\Gamma_z={}^{g_z} \Gamma_1.$

\section{A McKay-type bijection}\label{Maslowski}
We fix an indecomposable root system $\Phi$ of rank $n$. From now on $\G$ will denote a simple algebraic group of simply connected type with root system $\Phi$ defined over a field of characteristic $p$.

\subsection{Frobenius endomorphisms}\label{auto}

For $\alpha\in\Phi$ we fix isomorphisms $x_\alpha:(\mathbf{k},+)\to \U_\alpha$. If $f$ is a positive integer we consider the Frobenius endomorphism $F_{p^f}: \G \to \G$ given by $F_{p^f}(x_\alpha(a)):=x_{\alpha}(a^{p^f})$ for $\alpha \in \Phi$. Moreover, for every symmetry $\gamma$ of the Dynkin diagram associated to $\Delta$ there exists a graph automorphism $\gamma: \G \to \G$ given by $\gamma(x_\alpha(a)):=x_{\gamma (\alpha)}(a)$ for $a \in \mathbf{k}$ and $\alpha \in \pm \Delta$.

Up to inner automorphisms of $\G$, every Frobenius endomorphism $F$ of $\G$ defining an $\mathbb{F}_q$-structure is of the form $F_q \gamma$ for some symmetry $\gamma$ and we may thus assume that $F=F_q \gamma$. Let $w$ denote the order of $\gamma$. We say that $F$ is a standard Frobenius endomorphism if $F=F_q$.

\subsection{A regular embedding}\label{regular}

The center of $\G$ is a finite group. We let $d_p$ be its minimal number of generators. For our fixed root system $\Phi$ we let $d$ be the maximal $d_p$ occurring for any prime $p$. We have $d=2$ if the root system of $\G$ is of type $D_{n}$ and $n$ is even. In all other cases we either have $d=1$ or $d=0$.

Let $\mathbf{S}=(\mathbf{k}^{\times})^d$ be a torus of rank $d$. Let $\rho: \Z(\G)\hookrightarrow \mathbf{S}$ be an injective group homomorphism and define a group $\Gtilde$ by
\begin{align*}
\Gtilde=\G\times_\rho \mathbf{S}= (\G \times \mathbf{S}) / \{(z,\rho(z)^{-1}) \mid z\in \Z(\G)\}.
\end{align*}
We have embeddings $\G\hookrightarrow \Gtilde$ and $\mathbf{S}\hookrightarrow \Gtilde$ such that it is convenient to identify $\G$ and $\mathbf{S}$ with their images in $\Gtilde$. Under this identification $\Gtilde=\G \mathbf{S}$ has connected center $\Z(\Gtilde)=\mathbf{S}$. Note that the construction of $\Gtilde$ depends on the choice of $\rho$. In \cite[Section 6]{Maslowski2010} explicit choices are made which we assume to be taken.

Note that $\Btilde=\Ttilde \U=\N_{\Gtilde}(\U)$ is a Borel subgroup of $\Gtilde$ with maximal torus $\Ttilde=\T \mathbf{S}$ and unipotent radical $\U$.  We also denote by $F_p$ and $\gamma$ the extensions of the bijective morphisms from \ref{auto} to $\tilde{\G}$ as chosen in Section 3 and Section 4 of \cite{Maslowski2010}. 

\subsection{Generators of the torus}

Recall that $w$ denotes the order of the graph automorphism involved in $F$. Let us define an integer $\bar{d}$ by
$$\bar{d}=\begin{cases} 1 & \text{if $(\G,F)$ is of type $D_n$, $n$ even and $w=2$,} \\
0 & \text{if $(\G,F)$ is of type $D_n$, $n$ even and $w=3$,} \\
	d & \text{otherwise. }
\end{cases}$$
If $\bar{d}=0$ we define $t_0=1$. If $\bar{d}=1$ we let $t_0$ be the generator of $\Z(\Gtilde^F)$ as in \cite[Section 10]{Maslowski2010}. If $\bar{d}=2$ we let $t_{0^{(1)}},t_{0^{(2)}}$ be the generators of $\Z(\Gtilde^F)$ as in \cite[Section 10]{Maslowski2010}. In this case, we mean by $t_0$ both elements $t_{0^{(1)}}$ and $t_{0^{(2)}}$.

Recall from \ref{Gelfandsection} that the integer $r$ denotes the number of $\gamma$-orbits of $\Delta$. We let $t_1, \dots, t_r \in \Ttilde^F$ be as introduced in \cite[Proposition 8.1]{Maslowski2010} resp. \cite[Proposition 10.2]{Maslowski2010} which together with $t_0$ generate the torus $\Ttilde^F$.

\subsection{The linear characters of $\U^F$}\label{loc}

Let us from now on assume that $\G^F$ is not of type $B_2(2)$, $F_4(2)$ or $G_2(3)$. In this case, we have $[\U,\U]^F=[\U^F,\U^F]$ by \cite[Lemma 7]{Howlett}. By Lemma \ref{characterU} we obtain a bijection
\begin{align*}
\delta: \Irr (\U^F / [\U^F,\U^F] )\to \prod_{i=1}^r (\mathbb{F}_{q^{|A_i|}},+). \end{align*}
Let $S$ be a subset of $\{1,\dots,r \}$. We denote $S^c=\{0,1,\dots,r \}\setminus S$. Define the character $\phi_S$ of $\U^F / [\U^F,\U^F]$ to be $\phi_S=\delta^{-1}(c_1, \ldots ,c_r)$ with 
$$c_i=\begin{cases} 0 & \text{if } i \notin S, \\
	1 & \text{if } i \in S.
\end{cases}$$
For simplicity we identify $\phi_S\in \Irr(\U^F / [\U^F,\U^F])$ with its inflation to $\U^F$. Note that with this notation the linear character $\xi$ introduced in \ref{Gelfandsection} coincides with $\phi_{\{1,\ldots,r \}}$.

The action of $\Ttilde^F$ on the characters of $\U^F$ can be described explicitly and one obtains the following result.

\begin{proposition}\label{charactersU} The characters $\{ \phi_S\in\Irr(\U^F) \mid S\subseteq \{1,\dots,r \} \}$ form a complete set of representatives for the $\Btilde^F$-orbits on the linear characters of $\U^F$.  Moreover any character $\phi_S\in\Irr(\U^F)$ extends to its inertia group $\Btilde^F_{\phi_S}=\langle t_i \mid i \in S^c \rangle \U^F$.
\end{proposition}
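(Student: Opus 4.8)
The plan is to rephrase everything in terms of the action of $\Ttilde^F$ on $\Irr(\U^F/[\U^F,\U^F])\cong\prod_{i=1}^r(\mathbb{F}_{q^{|A_i|}},+)$ and then to read off the orbits, the stabilisers and the extensions from the simple-root description of that action. Since $\Btilde^F=\Ttilde^F\U^F$ with $\Ttilde^F\cap\U^F=1$, the group $\Btilde^F$ is the semidirect product of the abelian group $\Ttilde^F$ with $\U^F$, and $\U^F$ acts trivially by conjugation on the linear characters of $\U^F$; hence the $\Btilde^F$-action on $\Irr(\U^F/[\U^F,\U^F])$ factors through $\Ttilde^F$. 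For $t\in\Ttilde^F$ and $\alpha\in\Phi$ one has $tx_\alpha(a)t^{-1}=x_\alpha(\alpha(t)a)$, and $(\tau^k\alpha)(t)=\alpha(t)^{q^k}$ because $t$ is $F$-fixed, so the formula for $x_i$ gives $tx_i(a)t^{-1}=x_i(\mu_i(t)a)$, where $\mu_i\colon\Ttilde^F\to\mathbb{F}_{q^{|A_i|}}^\times$ is the restriction of the simple root $\alpha_{a_i}$. Dualising through $\gamma$, the character ${}^t\phi$ corresponds to $(\mu_1(t)^{-1}c_1,\dots,\mu_r(t)^{-1}c_r)$ when $\phi$ corresponds to $(c_1,\dots,c_r)$; that is, $\Ttilde^F$ acts on $\prod_{i}\mathbb{F}_{q^{|A_i|}}$ through the homomorphism $\mu=(\mu_1,\dots,\mu_r)$ and coordinatewise scaling.

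Next I would check that $\mu$ is surjective. The morphism $(\alpha_1,\dots,\alpha_n)\colon\Ttilde\to(\mathbf{k}^\times)^n$ is surjective (its dual $\mathbb{Z}^n\to X(\Ttilde)$, $e_i\mapsto\alpha_i$, is injective since the simple roots are linearly independent) and has kernel $\bigcap_{i}\kernel\alpha_i=\Z(\Gtilde)=\mathbf{S}$, which is connected; it is equivariant for the Frobenius permuting coordinates by $\tau$ and raising them to the $q$-th power. By the Lang--Steinberg theorem it remains surjective on $F$-fixed points, and $((\mathbf{k}^\times)^n)^F\cong\prod_{i=1}^r\mathbb{F}_{q^{|A_i|}}^\times$ via the partition of $\{1,\dots,n\}$ into $\tau$-orbits, so $\mu$ is surjective. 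Therefore the $\Btilde^F$-orbits on the linear characters of $\U^F$ are precisely the orbits of $\prod_i\mathbb{F}_{q^{|A_i|}}^\times$ acting coordinatewise on $\prod_i\mathbb{F}_{q^{|A_i|}}$; since $\mathbb{F}_{q^{|A_i|}}^\times$ has exactly the orbits $\{0\}$ and $\mathbb{F}_{q^{|A_i|}}^\times$ on $\mathbb{F}_{q^{|A_i|}}$, these orbits are indexed by the support $S=\{i:c_i\neq0\}$ and $\phi_S$ is the distinguished representative of the orbit with support $S$. This gives the first assertion, and it shows $\I_{\Ttilde^F}(\phi_S)=\{t\in\Ttilde^F:\mu_i(t)=1\text{ for all }i\in S\}=\bigcap_{i\in S}\kernel\mu_i$; since $\U^F\trianglelefteq\Btilde^F$ is contained in $\I_{\Btilde^F}(\phi_S)$, the Dedekind modular law yields $\I_{\Btilde^F}(\phi_S)=\big(\bigcap_{i\in S}\kernel\mu_i\big)\U^F$.

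It then remains to identify $\bigcap_{i\in S}\kernel\mu_i$ with $\langle t_i\mid i\in S^c\rangle$, and this is the only non-formal point: it rests on the explicit choice of the torus generators $t_0,t_1,\dots,t_r$ of $\Ttilde^F$ in \cite[Proposition 8.1]{Maslowski2010} resp. \cite[Proposition 10.2]{Maslowski2010}. I would use that the central element(s) $t_0$ satisfy $\mu_j(t_0)=1$ for all $j$ (they act trivially on $\U^F$), and that for $i,j\in\{1,\dots,r\}$ one has $\mu_j(t_i)=1$ for $i\neq j$ while $\mu_i$ restricts to an isomorphism $\langle t_i\rangle\cong\mathbb{F}_{q^{|A_i|}}^\times$. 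Writing a general $t\in\Ttilde^F$ as a product of powers of $t_0,\dots,t_r$ (legitimate since $\Ttilde^F$ is abelian and generated by these elements) and applying $\mu_j$ for $j\in S$ forces every $t_j$ with $j\in S$ to occur with exponent divisible by $q^{|A_j|}-1$, hence trivially; this gives $\bigcap_{i\in S}\kernel\mu_i\subseteq\langle t_i\mid i\in S^c\rangle$, and the reverse inclusion is immediate from $\mu_j(t_i)=1$ for all $i\in S^c$ and $j\in S$. Finally, $\I_{\Btilde^F}(\phi_S)=\langle t_i\mid i\in S^c\rangle\ltimes\U^F$ and $\phi_S$ is invariant under $\langle t_i\mid i\in S^c\rangle$, so $\widehat{\phi_S}(tu):=\phi_S(u)$ for $t\in\langle t_i\mid i\in S^c\rangle$ and $u\in\U^F$ is a well-defined linear character of the inertia group restricting to $\phi_S$. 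Thus the genuine work sits entirely in Maslowski's construction of the torus generators; once that is granted, the proposition is the orbit bookkeeping above.
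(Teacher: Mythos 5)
Your argument is correct in substance, but it is genuinely different from what the paper does: the paper's proof of this proposition is a bare citation of \cite[Propositions 8.4 and 8.5]{Maslowski2010}, whereas you reconstruct the statement from the torus action. Your reduction to the $\Ttilde^F$-action (since $\U^F$ fixes its own linear characters), the computation $t x_i(a)t^{-1}=x_i(\alpha_{a_i}(t)a)$ using $(\tau^k\alpha)(t)=\alpha(t)^{q^k}$ for $t\in\Ttilde^F$, and above all the Lang--Steinberg argument showing that $(\alpha_1,\dots,\alpha_n):\Ttilde\to(\mathbf{k}^\times)^n$ is surjective with connected kernel $\Z(\Gtilde)=\mathbf{S}$ and hence that $\mu$ hits all of $\prod_i\mathbb{F}_{q^{|A_i|}}^\times$, give a clean, case-free proof of the orbit classification by supports -- arguably more transparent than outsourcing it. What your route still borrows from Maslowski is exactly what you say it borrows: the ``diagonal'' behaviour of the generators, i.e.\ $\mu_j(t_i)=1$ for $i\neq j$, $\mu_j(t_0)=1$, and that $\mu_i$ maps $\langle t_i\rangle$ isomorphically onto $\mathbb{F}_{q^{|A_i|}}^\times$; these are properties of the explicit elements of \cite[Propositions 8.1 and 10.2]{Maslowski2010} and are not stated in the present paper, so they must be checked there (the twisted types and type $D_n$ with $n$ even, where $\bar d\neq d$, are where care is needed, and if $t_i$ had order larger than $q^{|A_i|}-1$ your divisibility step would only place the excess in $\bigcap_j\kernel\mu_j=\Z(\Gtilde)^F$, which then has to be absorbed into $\langle t_0\rangle$ -- again a feature of Maslowski's construction). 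Your extension of $\phi_S$ to $\langle t_i\mid i\in S^c\rangle\U^F$ by $tu\mapsto\phi_S(u)$ is the standard extension over a semidirect product with invariant character and is fine. In short: the paper buys brevity by citing Maslowski wholesale; you buy transparency by isolating the single non-formal input, at the price of having to verify the generator properties against his construction.
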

\begin{proof}
	This is \cite[Proposition 8.4]{Maslowski2010} and \cite[Proposition 8.5]{Maslowski2010}.
\end{proof}

As a consequence of the previous proposition we can describe the action of Galois automorphisms on linear characters of $\U^F$.

\begin{lemma}\label{galoisU}
	Let $\sigma\in\mathcal{G}$. Then there exists some $\tilde{t} \in \Ttilde^F$ such that $\phi_S^\sigma=\phi_S^{\tilde{t}}$ for every $S \subseteq \{1, \ldots, r\}$.
\end{lemma}

\begin{proof}
Let $\mathcal{S}= \{1, \ldots, r \}$. By the uniqueness statement of Proposition \ref{charactersU} we have  $\phi_\mathcal{S}^\sigma=\phi_{\mathcal{S}^{'}}^{\tilde{t}}$ for some $\mathcal{S}^{'} \subseteq \{1,\dots ,r \}$  and some $\tilde{t} \in \Ttilde^F$. Recall that the subgroups $\U_{A_i}^F$ are stabilized by the $\Ttilde^F$-action. Thus, we have 
 
 \begin{align*} \prod_{i \in \mathcal{S}^{'}} \phi_i^{\tilde{t}}=\phi_{\mathcal{S}^{'}}^{\tilde{t}}=\phi_\mathcal{S}^\sigma=\prod_{i\in \mathcal{S}} \phi_i^\sigma. \end{align*}
 Since the characters $\phi_i,\phi_i^\sigma \in\operatorname{Irr}(\U_{A_i}^F)$ are nontrivial this implies $\mathcal{S}=\mathcal{S}^{'}$ and $\phi_\mathcal{S}^\sigma=\phi_\mathcal{S}^{\tilde{t}}$. Hence for every $S \subseteq \mathcal{S}$ we have $\phi_S^\sigma = \phi_S^{\tilde{t}}$ as well.
\end{proof}

\subsection{A labeling for the local characters}\label{loclabel}

We can now parametrize the $p'$-characters of $\Btilde^F$. Let $\psi\in\Irr_{p'}(\Btilde^F)$. Since $\U^F$ is a normal $p$-subgroup of $\Btilde^F$ and $\psi$ has $p'$-degree it follows by Clifford's theorem that $\psi$ lies above a linear character of $\U^F$. Hence, by Proposition \ref{charactersU} there exists a uniquely determined subset $S \subseteq \{1,\dots,r\}$ such that $\psi$ lies above $\phi_S\in\Irr(\U^F)$.  By Clifford correspondence there exists a unique character $\lambda\in\Irr(\Btilde^F_{\phi_S}\mid \phi_S)$ with $\lambda^{\Btilde^F}=\psi$. Note that $\Btilde^F_{\phi_S}=\langle t_i \mid i \in S^c \rangle \U^F$ by Proposition \ref{charactersU}. We define the map $\tilde{f}_{\mathrm{loc}}:\Irr_{p'}(\Btilde^F)\to (K^\times)^{\bar{d}} \times K^r$ by
$$
		(\tilde{f}_{\mathrm{loc}}(\psi))_0=\begin{cases}
		\lambda(t_0) & \text{if } \bar{d}\leq 1, \\
		(\lambda(t_{0^{(1)}}),\lambda(t_{0^{(2)}})) & \text{if } \bar{d}=2,
				\end{cases}$$
and
$$(\tilde{f}_{\mathrm{loc}}(\psi))_i=\begin{cases}
		\lambda(t_i) & \text{if } i\in S^c \setminus \{0 \}, \\
		0 & \text{if } i\in S,
				\end{cases}$$
where $\lambda$ is determined by $\psi$ as above. For $i\in S^c$ the values $\lambda(t_i)$ of the linear character $\lambda$ are $(q^w-1)$-th roots of unity, where $w$ is as defined in \ref{Gelfandsection}. The elements of $p'$-order of $K^\times$ are in the image of the embedding $\mathbf{k}^\times \to K^\times$ chosen in \ref{rings}.  Hence, we may consider $(\tilde{f}_{\mathrm{loc}}(\psi))_i$ as an element of $\mathbb{F}_{q^w}$ and we obtain a map $\tilde{f}_{\mathrm{loc}}:\Irr_{p'}(\Btilde^F)\to (\mathbb{F}_{q^w}^\times)^{\bar{d}} \times \mathbb{F}_{q^w}^r$.
Let $\mathcal{A}\subseteq (\mathbb{F}_{q^w}^\times)^{\bar{d}} \times \mathbb{F}_{q^w}^r$ be the image of the map $\tilde{f}_{\mathrm{loc}}$. By \cite[Theorem 10.8]{Maslowski2010} the map $\tilde{f}_{\mathrm{loc}}:\Irr_{p'}(\Btilde^F)\to \mathcal{A}$ is injective and hence a bijection.

\subsection{The dual group}\label{dualgroup}

We give an explicit construction of the dual algebraic group of $\Gtilde$, following the construction in \cite[Section 7]{Maslowski2010}. It is similar to the construction in \ref{regular}. Let $\mathbf{G}^\vee$ be a simple algebraic group of simply connected type with root system $\Phi^\vee$. We fix a maximal torus $\T^\vee$ of $\G^\vee$ and identify the root system of $\G^\vee$ relative to the torus $\T^\vee$ with the coroot system $\Phi^\vee$.

We let $\mathbf{S}^\vee=(\mathbf{k}^\times)^d$, where $d$ is as in \ref{regular}, and we choose an injective group homomorphism $\rho^\vee:\Z(\mathbf{G}^\vee)\to\mathbf{S}^\vee$ as in \cite[Section 7]{Maslowski2010}. Denote by $\Gtilde^{*}$ the resulting linear algebraic group $\Gtilde^*=\G^\vee \times_{\rho^\vee} \mathbf{S}^\vee$ with maximal torus $\Ttilde^*:=\T^\vee \mathbf{S}^\vee$. By the results of \cite[Section 7]{Maslowski2010} there exists a Frobenius endomorphism $F^*$ of $\Gtilde^*$ such that $(\Gtilde,\Ttilde,F)$ is dual to the triple $(\Gtilde^{*},\Ttilde^*,F^{*})$.

\subsection{Fundamental weights}

Since $\G^\vee$ is a simple algebraic group of simply connected type its character group $X(\T^\vee)$ has a basis given by the fundamental weights. More precisely, let $\beta_1,\dots,\beta_n\in X(\T^\vee)$ be a basis of the root system $\Phi^\vee$ (corresponding to $\alpha_1^\vee,\dots,\alpha_n^\vee$ under the identification of the root system of $\G^\vee$ with $\Phi^\vee$). Denote by $\langle \: , \: \rangle: X(\T^\vee) \times Y(\T^\vee) \to \mathbb{Z}$ the canonical pairing. Then there exist weights $\omega_i\in X(\T^\vee)$ satisfying $\langle \omega_i,\beta_j^\vee \rangle=\delta_{ij}$ for all $i,j=1,\dots,n$. Moreover, we let $\tilde{\omega_i}\in X(\Ttilde^*)$ be the unique extension of $\omega_i$ to $\Ttilde^*$ which acts trivially on $\mathbf{S}^\vee$.

\subsection{The determinant map}

Write $g\in \Gtilde^*$ as $g=xz$ with $x\in \G^\vee$ and $z\in \mathbf{S}^\vee$. We define the determinant map $\det:\Gtilde^*\to \mathbf{S}^\vee$ to be the map with $\det(xz)=z^{l}$ where $l$ is the exponent of the fundamental group of the root system $\Phi$.

Note that the map $\det$ is a well-defined homomorphism of algebraic groups, see the remark below \cite[Definition 7.2]{Maslowski2010}. Furthermore, we denote by $\det_i:\Gtilde^*\to \mathbf{k}^\times$ the $i$-th component of the determinant map.

\subsection{The modified Steinberg map}\label{modStein}

Following Maslowski \cite[Section 14]{Maslowski2010}, we introduce the modified Steinberg map which separates the semisimple conjugacy classes of $\Gtilde^*$.

By a theorem of Chevalley, see \cite[Theorem 15.17]{Malle2011}, there exists a rational irreducible $\mathbf{k}\G^\vee$-module $V_i$ which is a highest weight module of highest weight $\omega_i\in Y(\T^\vee)$. Let $\pi_i:\G^\vee \to \mathbf{k}$ denote the trace function of the representation associated to the $\mathbf{k} \G^\vee$-module $V_i$. We define the Steinberg map 
$$\pi:\G^\vee\to \mathbf{k}^n:g\mapsto(\pi_1(g),\dots,\pi_n(g))$$
as the product map of these trace functions.
A fundamental property of the Steinberg map is that two semisimple elements of $\G^\vee$ are $\G^\vee$-conjugate if and only if they have the same image under the Steinberg map, see \cite[Corollary 6.7]{Steinberg}.

%

We can write any element $\tilde{g}\in\Gtilde^*$ (not necessarily unique) as $\tilde{g}=xz$ with $x\in \G^\vee$ and $z\in\mathbf{S}^\vee$. In \cite[Section 14]{Maslowski2010} Maslowski defines the map $\tilde{\pi}:\Gtilde^* \to (\mathbf{k}^\times)^{d} \times \mathbf{k}^n$ by 
\begin{align*}
\tilde{g}=xz\mapsto (\operatorname{det}(xz),(\pi_1(x) \tilde{\omega}_1(z),\dots,\pi_n(x) \tilde{\omega}_n(z))). 
\end{align*}
Based on the result of Steinberg mentioned above, Maslowski shows in \cite[Proposition 14.2]{Maslowski2010} that the map $\tilde{\pi}$ separates semisimple conjugacy classes of $\Gtilde^*$. Moreover, the semisimple $\tilde{\G}^\ast$-conjugacy classes of elements $s \in \tilde{\G}^\ast$ with image $\tilde{\pi}(s)$ in $(\mathbb{F}_q^\times)^d \times \mathbb{F}_q^n$ are precisely the $(q-1)^d q^n$ different $F_q^\ast$-stable semisimple conjugacy classes of $\Gtilde^*$. 

\subsection{A labeling for the global characters}\label{globallabel}


We now describe a labeling for the $p'$-characters of $\Gtilde^F$. Let $\chi\in\Irr_{p'}(\Gtilde^F)$ be a $p'$-character. Then there exists a conjugacy class $(\tilde{s})$ of $(\Gtilde^\ast)^{F^\ast}$ such that $\chi\in \mathcal{E}(\Gtilde^F,(\tilde{s}))$.

We first consider the case that $F=F_q$. In this case, we define the label of $\chi$ by $\tilde{\pi}(\tilde{s})=(b_0,(b_1,\dots,b_n))\in (\mathbb{F}_q^\times)^d \times \mathbb{F}_q^n$.
	
Now suppose that $F$ is not a standard Frobenius map. Since $(\Gtilde^\ast)^{F^\ast} \subseteq (\Gtilde^\ast)^{F_{q^w}^\ast}$ we have $\tilde{s} \in  (\Gtilde^\ast)^{F_{q^w}^\ast}$. In particular it holds $\tilde{\pi}(\tilde{s})=(b_0,(b_1,\dots,b_n))\in(\mathbb{F}_{q^w}^\times)^d \times \mathbb{F}_{q^w}^n$. We define the label of $\chi$ by $(b_{0^{(1)}},(b_{a_1},\dots,b_{a_r}))\in (\mathbb{F}_q^\times)^{\bar{d}} \times \mathbb{F}_q^r$, where $a_i\in A_i$ are the fixed representatives of the orbits of the $\gamma$-action and $b_{0^{(1)}}$ is the first component of $b_0\in (\mathbb{F}_{q^w}^\times)^d$. 

In any case, the possible labels which occur consist precisely of the elements of $\mathcal{A}$, where $\mathcal{A}$ is defined as in \ref{loc}. We shall denote by $\tilde{f}_{\mathrm{glo}}:\Irr_{p'}(\Gtilde^F) \to \mathcal{A}$ the map which sends a character to its label.
 
\subsection{The Maslowski bijection and its properties}\label{subbijection} 
From now on we often write $H=\mathbf{H}^F$ for the group of fixed points under $F$ of an $F$-stable subgroup $\mathbf{H}$ of $\Gtilde$. In most cases, the map $\tilde{f}_{\mathrm{glo}}:\Irr_{p'}(\Gtilde^F) \to \mathcal{A}$ is known to be bijective.
\begin{theorem}\label{bijection}
Suppose that $(\G,F)$ is not contained in the following table.
\\
\\
\begin{tabularx}{8cm}{p{0.25\textwidth}|X|X|}
type & Frobenius map \\
\hline
$B_n$, $C_n$, $D_n$, $G_2$, $F_4$ & $q=2$, $w=1$ \\
\hline
$G_2$ & $q=3,w=1$ \\
\end{tabularx}
\\
\\
\\
Then the map $\tilde{f}_{\mathrm{glo}}:\Irr_{p'}(\tilde{G}) \to \mathcal{A}$ is a bijection. Consequently the map $\tilde{f}=\tilde{f}_{\mathrm{glo}}^{-1} \circ \tilde{f}_{\mathrm{loc}}:\Irr_{p'}(\tilde{B})\to \Irr_{p'}(\tilde{G})$ is a bijection. Moreover, for every character $\lambda \in \Irr(\Z(\tilde{G}))$ the bijection $\tilde{f}$ restricts to a bijection $\Irr_{p'}(\tilde{B} \mid \lambda)\to \Irr_{p'}(\tilde{G} \mid \lambda)$.
\end{theorem}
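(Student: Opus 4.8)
The plan is to deduce the bijectivity of $\tilde{f}_{\mathrm{glo}}$ from a statement about semisimple classes, and then to obtain $\tilde{f}$ and the central-character refinement almost formally. By the proof of Lemma \ref{pprime} the Gelfand--Graev character of $\Gtilde$ decomposes as $\Gamma=\sum_{(\tilde{s})}\chi_{(\tilde{s})}$ into pairwise distinct irreducibles indexed by the semisimple conjugacy classes of $\Gtilde^{*^{F^*}}$, and since $p$ is good the $p'$-characters of $\Gtilde^F$ are exactly the $\pm\D_{\Gtilde}(\chi_{(\tilde{s})})$. As $\D_{\Gtilde}$ preserves Lusztig series up to sign and $\chi_{(\tilde{s})}\in\mathcal{E}(\Gtilde^F,(\tilde{s}))$, each series contains a unique $p'$-character; since the series partition $\Irr(\Gtilde^F)$, sending a $p'$-character $\chi$ to the class $(\tilde{s})$ with $\chi\in\mathcal{E}(\Gtilde^F,(\tilde{s}))$ is a bijection onto the set of $F^*$-stable semisimple classes of $\Gtilde^*$. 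The image of $\tilde{f}_{\mathrm{glo}}$ equals $\mathcal{A}$ by the discussion in \ref{globallabel}, so it suffices to show $\tilde{f}_{\mathrm{glo}}$ is injective, i.e. that an $F^*$-stable semisimple class is recovered from its label.

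For a standard Frobenius this is immediate: the label of $\chi\in\mathcal{E}(\Gtilde^F,(\tilde{s}))$ is $\tilde{\pi}(\tilde{s})$ itself, and $\tilde{\pi}$ separates semisimple classes of $\Gtilde^*$ by \cite[Proposition 14.2]{Maslowski2010}. For a twisted $F$ one must further show that the reduced label (the $0$-component of $\det(\tilde{s})$ together with the $\pi$-coordinates indexed by the orbit representatives $a_i$) already determines the whole tuple $\tilde{\pi}(\tilde{s})\in(\mathbb{F}_{q^w}^\times)^d\times\mathbb{F}_{q^w}^n$: the $F^*$-stability of $(\tilde{s})$ together with the identity $\pi_i(y^p)=\pi_i(y)^p$ of Lemma \ref{Steinberg}, iterated to the $q$-th power, forces relations $b_{\tau(j)}=b_j^q$ on the $\pi$-coordinates and the analogous relation among the components of $\det(\tilde{s})$, so that the values at the $a_i$ and the $0$-component propagate to all coordinates; injectivity then follows as before. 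This is the global counterpart of \cite[Theorem 10.8]{Maslowski2010} and I expect it to be the main obstacle: it is Maslowski's genuinely hard input, and it is exactly here that the cases in the table must be excluded (for those very small $q$ the trace functions $\pi_i$ do not separate enough classes and the counts fail to match $|\mathcal{A}|$). For this step I would invoke \cite{Maslowski2010} rather than reproduce the argument.

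Granting this, $\tilde{f}_{\mathrm{glo}}:\Irr_{p'}(\Gtilde^F)\to\mathcal{A}$ is a bijection, and since $\tilde{f}_{\mathrm{loc}}:\Irr_{p'}(\Btilde^F)\to\mathcal{A}$ is a bijection by \cite[Theorem 10.8]{Maslowski2010}, the composite $\tilde{f}=\tilde{f}_{\mathrm{glo}}^{-1}\circ\tilde{f}_{\mathrm{loc}}$ is a bijection. For the central-character statement the point is that the $0$-component of each label records the central character below a $p'$-character. Locally: if $\psi\in\Irr_{p'}(\Btilde^F)$ lies above $\phi_S$ with Clifford correspondent $\lambda\in\Irr(\I_{\Btilde^F}(\phi_S)\mid\phi_S)$, then $\lambda$ is linear (it extends $\phi_S$ by Proposition \ref{charactersU} and $\I_{\Btilde^F}(\phi_S)/\U^F$ is abelian), and as $t_0\in\Z(\Gtilde^F)$ is central in $\Btilde^F$ one gets $\psi(t_0)/\psi(1)=\lambda(t_0)=(\tilde{f}_{\mathrm{loc}}(\psi))_0$, so $\psi$ lies above $\mu\in\Irr(\Z(\Gtilde^F))$ if and only if $\mu(t_0)=(\tilde{f}_{\mathrm{loc}}(\psi))_0$. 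Globally: every character in $\mathcal{E}(\Gtilde^F,(\tilde{s}))$ restricts on $\Z(\Gtilde^F)$ to a multiple of the linear character attached to $(\tilde{s})$ under duality (a standard property of Lusztig series for groups with connected center, preserved by $\D_{\Gtilde}$), and by the construction of the determinant map in \cite[Section 14]{Maslowski2010} this character evaluated at $t_0$ is $(\tilde{f}_{\mathrm{glo}}(\chi))_0$. Hence $\tilde{f}$ preserves the central character below a $p'$-character, and since $\Irr_{p'}(\Btilde^F)=\bigsqcup_{\lambda}\Irr_{p'}(\Btilde^F\mid\lambda)$ and likewise for $\Gtilde^F$, the bijection $\tilde{f}$ restricts to a bijection $\Irr_{p'}(\Btilde^F\mid\lambda)\to\Irr_{p'}(\Gtilde^F\mid\lambda)$ for every $\lambda\in\Irr(\Z(\Gtilde^F))$.
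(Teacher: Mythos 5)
The paper's own ``proof'' of Theorem \ref{bijection} is a one-line citation of \cite[Theorem 15.3]{Maslowski2010}, so there is no in-text argument to compare against; and since at the decisive points (injectivity of the labelling in the twisted case, the role of the excluded table rows, the counting) you also simply invoke \cite{Maslowski2010}, your proposal is in substance the same appeal to that source, dressed up with a plausible reconstruction of how the pieces fit together. Two of the steps you do spell out, however, are not sound as stated.

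First, your opening move uses Lemma \ref{pprime} (``since $p$ is good the $p'$-characters of $\Gtilde^F$ are exactly $\pm\D_{\Gtilde}(\chi_{(\tilde{s})})$''), but Theorem \ref{bijection} carries no good-prime hypothesis: the exclusions in the table are small-$q$ cases, and the paper later applies the theorem to bad primes (Corollary \ref{better} handles bad $p$ with trivial center via Theorem \ref{bijection} and Corollary \ref{trivialcenter}). As written, your argument therefore only yields the good-$p$ case; for bad $p$ one needs a different description of $\Irr_{p'}(\Gtilde^F)$ (the Green--Lehrer--Lusztig result alluded to in the introduction, which is what Maslowski uses), not Lemma \ref{pprime}. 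Second, the surjectivity step ``the image of $\tilde{f}_{\mathrm{glo}}$ equals $\mathcal{A}$ by the discussion in \ref{globallabel}'' is circular: $\mathcal{A}$ is \emph{defined} as the image of the local map $\tilde{f}_{\mathrm{loc}}$, and the assertion that the global labels exhaust exactly this set is precisely the counting heart of \cite[Theorem 15.3]{Maslowski2010} (and is exactly where the excluded rows of the table intervene), so it is part of what has to be proved, not an available input. Your central-character argument is the right picture, but note that the identification of the $0$-component of the global label with the value of the central character at $t_0$ is again Maslowski's explicit computation (and the description of $\Z(\Gtilde^F)$ by the generators $t_0$ has its own caveats), so this too is a citation rather than a formal consequence of what the paper provides.
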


\begin{proof}
See \cite[Theorem 15.3]{Maslowski2010} and the remarks following \cite[Proposition 3.4]{Spaeth}.
\end{proof}

We shall keep the assumptions of Theorem \ref{bijection} for the remainder of this article.

\section{The McKay Conjecture and Galois automorphisms}\label{section5}

This section is roughly divided in three parts. In \ref{compatibility} we show that the bijection $\tilde{f}:\operatorname{Irr}_{p'}(\Btilde^F)\to\operatorname{Irr}_{p^{'}}(\Gtilde^F)$ is $\mathcal{H}$-equivariant. After this, we relate the $p'$-characters of $\Btilde^F$ (resp. of $\Gtilde^F$) with the $p'$-characters of $\B^F$ (resp. of $\G^F$). In \ref{mainsection} we use these results to provide a proof of Theorem \ref{maintheorem} from the introduction.

 We keep the assumptions of Theorem \ref{bijection}.

\subsection{Compatibility of the character bijection with Galois automorphisms}\label{compatibility}


%

We now show that the bijection $\tilde{f}:\Irr_{p'}(\tilde{B}) \to \Irr_{p'}(\tilde{G})$ is $\mathcal{H}$-equivariant. In the following proof we freely use the notation introduced in Section \ref{Maslowski}.

	\begin{theorem}\label{sigma} 
The bijection
$
		\tilde{f}:\operatorname{Irr}_{p'}(\tilde{B})\to\operatorname{Irr}_{p'}(\tilde{G})
$
is $\mathcal{H}$-equivariant.

\end{theorem}

	\begin{proof}
Let $\psi\in \operatorname{Irr}_{p'}(\tilde{B})$ with label $\tilde{f}_{\mathrm{loc}}(\psi)=(c_0,(c_1,\ldots,c_r))$. Denote by $\chi=\tilde{f}(\psi)$ the $p'$-character of $\tilde{G}$ which has the same label as $\psi$. Fix a Galois automorphism $\sigma \in \mathcal{H}$ that sends any $p'$-root of unity $\zeta$ to $\zeta^{p^e}$.

We proceed in several steps. In a first step we compute the label $g(\psi^\sigma)$ of the character $\psi^\sigma$. In a second step we prove that $\tilde{f}_{\mathrm{glo}}(\chi^\sigma)=\tilde{f}_{\mathrm{loc}}(\psi^\sigma)$ which implies $\tilde{f}(\psi^\sigma)=\chi^\sigma$ since $\tilde{f}=\tilde{f}_{\mathrm{glo}}^{-1} \circ \tilde{f}_{\mathrm{loc}}$.
	\\
	\textbf{First step:} By Proposition \ref{charactersU} there exists a unique set $S\subseteq \{1,\dots,r \}$ such that the character $\psi\in\Irr_{p'}(\tilde{B})$ lies above the character $\phi_S\in\operatorname{Irr}(U)$. By Clifford correspondence there exists a unique character $\lambda\in \operatorname{Irr}(\tilde{B}_{\phi_S}\mid \phi_S)$ such that $\lambda^{\tilde{B}}=\psi$. 
	
Since $\psi$ lies above $\phi_S$ it follows that $\psi^\sigma$ lies above the character $\phi_S^\sigma$. By Lemma \ref{galoisU} we have $\phi_S^\sigma=\phi_S^{\tilde{t}}$ for some $\tilde{t}\in\tilde{T}$. The character $\lambda^\sigma$ lies above the character $\phi_S^\sigma=\phi_S^{\tilde{t}}$. Since the factor group $\tilde{B}/U \cong \tilde{T}$ is abelian we have $(\lambda^\sigma)^{\tilde{t}^{-1}}\in \Irr(\tilde{B}_{\phi_S})$. Consequently, $(\lambda^\sigma)^ {\tilde{t}^{-1}}$ lies above the character $\phi_S$ and $((\lambda^\sigma)^ {\tilde{t}^{-1}})^ {\tilde{B}}=\psi^\sigma$. Let $m=|\tilde{G}|$ and $\zeta$ be a primitive $m$-th root of unity. We write $m_p$ for the highest $p$-power dividing $m$ and $m_{p'}$ for the $p'$-part of $m$. Furthermore we let $k$ be an integer such that $\sigma(\zeta)=\zeta^k$. Since $\lambda$ is linear we have $\lambda^\sigma=\lambda^k$. Thus we obtain
$$(\tilde{f}_{\mathrm{loc}}(\psi^\sigma))_i=(\lambda^k)^{\tilde{t}^{-1}}(t_i)=(\lambda^k)(\tilde{t} t_i \tilde{t}^{-1})=\lambda^k(t_i).$$
By definition of the map $\tilde{f}_{\mathrm{loc}}$ we have $(\tilde{f}_{\mathrm{loc}}(\psi^\sigma))_i=c_i^k$ for every $i\in S^c$ and $(\tilde{f}_{\mathrm{loc}}(\psi^\sigma))_i=0$ for $i\in S$. Consequently, the label of $\psi^\sigma$ is given by $\tilde{f}_{\mathrm{loc}}(\psi^\sigma)=(c_0^k,c_1^k,\dots,c_r^k)$.

Since $ \sigma \in \mathcal{H}$ we have $k \equiv p^e \mod m_{p'}$. By \cite[Table 24.1]{Malle2011} it follows that $q^w-1$ divides $m_{p'}$, which implies that $k \equiv p^e \mod(q^w-1)$. Since $c_i\in \mathbb{F}_{q^w}$ for all $i$, we have
 $$ \tilde{f}_{\mathrm{loc}}(\psi^\sigma)=(c_0^k,(c_1^k,\dots,c_r^k))=(c_0^{p^e},(c_1^{p^e},\dots,c_r^{p^e})).$$
\\
\textbf{Second step:} We have $\chi\in\mathcal{E}(\Gtilde^F,(\tilde{s}))$ for some semisimple conjugacy class $(\tilde{s})$ of the dual group $(\Gtilde^\ast)^{F^\ast}$. By Lemma \ref{sigmadl} we have $\chi^{\sigma}\in\mathcal{E}(\Gtilde^F,(\tilde{s}^k))$. We have $m=|(\Gtilde^\ast)^{F^\ast}|$ since $(\Gtilde,F)$ and $(\Gtilde^*,F^*)$ are in duality, see \cite[Proposition 4.4.4]{Carter1985}. Thus, the order of the semisimple element $\tilde{s}$ is a divisor of $m_{p'}$. Since $k\equiv p^e \mod m_{p'}$ this shows $\tilde{s}^ {p^e}=\tilde{s}^k$. Hence, we have $\chi^\sigma \in \mathcal{E}(\Gtilde^F,(\tilde{s}^{p^e}))$.
	
First we assume that $F=F_q$ is a standard Frobenius map. We may write $\tilde{s}\in \Gtilde^*$ as $\tilde{s}=xz$ where $x\in\G^\vee$ and $z\in\mathbf{S}^\vee$. The label of the character $\chi^\sigma\in \mathcal{E}(\Gtilde^F,(\tilde{s}^{p^e}))$ is given by
	\begin{align*}
	\tilde{\pi}(\tilde{s}^{p^e})=\tilde{\pi}((xz)^{p^e})=(\operatorname{det}((xz)^{p^e}),(\pi_1(x^{p^e}) \tilde{\omega}_1(z^{p^e}),\dots,\pi_n(x^{p^e})\tilde{\omega}_n(z^{p^e}))).
		\end{align*}
Note that $\operatorname{det}((xz)^{p^e})=\det(xz)^{p^e}$ since $\det$ is multiplicative. Recall that $\tilde{\pi}_i(\tilde{s}^{p^e})=\pi_i(x^{p^e}) \tilde{\omega}_i(z^{p^e})$ by definition of the modified Steinberg map. By \cite[Lemma 14.1]{Maslowski2010} we have $\pi_i(x^{p^e})=\pi_i(x)^{p^e}$. Moreover, we have $\tilde{\omega}_i(z^{p^e})=\tilde{\omega}_i(z)^{p^ e}$ since $\tilde{\omega}_i\in X(\Ttilde^*)$. Therefore we obtain $\tilde{\pi}_i(\tilde{s}^{p^e})=\pi_i(x)^{p^e} \tilde{\omega}_i(z)^{p^e}=\tilde{\pi}_i(\tilde{s})^{p^e}$. Since $\tilde{\pi}(xz)=(c_0,(c_1,\dots,c_n))$ we have $\tilde{\pi}((xz)^{p^e})=(c_0^{p^e},(c_1^{p^e},\dots,c_n^{p^e}))$ and therefore the label of $\chi^\sigma$ is given by $\tilde{f}_{\mathrm{glo}}(\chi^\sigma)=(c_0^{p^e},(c_1^{p^e},\dots,c_n^{p^e}))=\tilde{f}_{\mathrm{loc}}(\psi^\sigma)$ and we have $\tilde{f}(\psi^\sigma)=\tilde{f}(\psi)^\sigma$, as required.

Let us now assume that $F$ is not a standard Frobenius endomorphism. Let $\tilde{\pi}(\tilde{s})=(b_0,(b_1,\dots,b_n))\in (\mathbb{F}_{q^w}^\times)^{d} \times \mathbb{F}_{q^w}^n$ be the image of $\tilde{s} \in (\Gtilde^\ast)^{F^\ast} \subseteq (\Gtilde^\ast)^{F_{q^w}^\ast}$ under the modified Steinberg map. As we have shown above, the image of $(\tilde{s}^{p^e})$ under the modified Steinberg map is given by $\tilde{\pi}(\tilde{s}^{p^e})=(b_0^{p^e},(b_1^{p^e},\dots,b_n^{p^e}))$. Let $b_{0^{(1)}}$ be the first component of $b_0 \in (\mathbb{F}_{q^w}^\times)^{\bar{d}}$. The label of the character $\chi\in \mathcal{E}(\Gtilde^F,(\tilde{s}))$ is given by $\tilde{f}_{\mathrm{glo}}(\chi)=(b_{0^{(1)}},(b_{a_1},\dots,b_{a_r}))$ and the label of $\chi^\sigma\in\mathcal{E}(\Gtilde^F,(\tilde{s}^{p^e}))$ is $\tilde{f}_{\mathrm{glo}}(\chi^\sigma)=(b_{0^{(1)}}^{p^e},(b_{a_1}^{p^e},\dots,b_{a_r}^{p^e}))$. We have $\tilde{f}_{\mathrm{loc}}(\psi)=(c_0,(c_1,\dots,c_r))=(b_{0^{(1)}},(b_{a_1},\dots,b_{a_r}))=\tilde{f}_{\mathrm{glo}}(\chi)$ since $\tilde{f}(\psi)=\chi$. Thus, we conclude that  $$\tilde{f}_{\mathrm{loc}}(\psi^\sigma)=(c_0^{p^e},(c_1^{p^ e},\dots,c_r^{p^e}))=(b_{0^{(1)}}^{p^e},(b_{a_1}^{p^ e},\dots,b_{a_r}^{p^e}))=\tilde{f}_{\mathrm{glo}}(\chi^\sigma).$$
This shows $\tilde{f}(\psi^\sigma)=\chi^\sigma$, as desired.
\end{proof}

The following remark will be crucial in the upcoming calculations.

\begin{remark}\label{field}
	Let $\sigma \in \mathcal{H}$ be a Galois automorphism such that $\sigma$ sends any $p'$-root of unity $\zeta$ to $\zeta^{p^e}$. Let $\chi \in \Irr_{p'}( \tilde{G})$ and $\psi \in \Irr_{p'}( \tilde{B})$. By the proof of Theorem \ref{sigma} and \cite[Proposition 14.1]{Maslowski2010} the characters $\chi^\sigma$ and $\chi^{F_{p^e}}$ have the same label, which implies that $\chi^\sigma= \chi^{F_{p^e}}$. The same argument (using \cite[Proposition 9.4]{Maslowski2010} instead of \cite[Proposition 14.1]{Maslowski2010}) shows that $\psi^\sigma= \psi^{F_{p^e}}$.
\end{remark}

Theorem \ref{sigma} gives us Theorem \ref{maintheorem} in the case where $\Z(G)$ is trivial.

\begin{corollary}\label{trivialcenter}
 Suppose that $\Z(G)=1$. Then there exists an $\mathcal{H}$-equivariant bijection $f:\Irr_{p'}(B) \to \Irr_{p'}(G)$.
\end{corollary}

\begin{proof}
By Theorem \ref{bijection} and Theorem \ref{sigma} we obtain an $\mathcal{H}$-equivariant bijection $\Irr_{p'}(\tilde{B} \mid \lambda) \to \Irr_{p'}(\tilde{G} \mid \lambda)$ for any $\lambda \in \Z(\tilde{G})$.
Since $\Z(G)=1$ we have $\tilde{G} \cong  G \times \Z(\tilde{G})$. By Theorem \ref{sigma} we have an $\mathcal{H}$-equivariant bijection $\tilde{f}:\Irr_{p'}(\tilde{B} \mid \lambda^{\mathcal{H}}) \to \Irr_{p'}(\tilde{G} \mid \lambda^{\mathcal{H}})$ for every central character $\lambda \in \Z(\tilde{G})$. So in particular, for $\lambda=1_{\Z(\tilde{G})}$ we obtain a bijection $f:\Irr_{p'}(B) \to \Irr_{p'}(G)$.
\end{proof}

\subsection{Group automorphisms}

We denote by $D$ the subgroup of $\mathrm{Aut}(\tilde{\G}^F)$ generated by the restrictions to $\Gtilde^F$ of the graph automorphisms $\gamma: \Gtilde \to \Gtilde$ which commute with $F$ and the Frobenius endomorphism $F_p: \tilde{\G}^F \to \tilde{\G}^F$ as in \ref{auto} and \ref{regular}. Note that $\B^F$ is $D$-invariant. We may and we will choose the character $\phi_0 \in \Irr( \mathbb{F}_{q^w}, +)$ in \ref{Gelfandsection} such that it has order $p$. A consequence of that choice is that the characters $\phi_S$, for $S \subseteq \{1, \dots, r \}$ are $F_p$-stable. In particular, the character $\xi= \phi_{ \{1, \ldots, r \} }$ is $D$-stable and so is the Gelfand--Graev character $\Gamma_1=\xi^{\G^F}$.  

\subsection{Describing the $p'$-characters of $\G^F$}\label{pprime}

From now we work with the following assumption, see \cite[Assumption 3.2]{Spaeth}:

\begin{assumption}\label{spaethassumption}
The group $G=\G^F$ satisfies 
	\[G\notin \{ B_n(2), G_2(2), B_2(2^i), G_2(3^i), F _4(2^i) \mid n \geq 2, i \geq 1 \}.\] 
\end{assumption}


We denote by $\D_\G:\mathbb{Z}\Irr(\G^F)\to \mathbb{Z}\Irr(\G^F)$ the Alvis--Curtis duality map, see \cite[Chapter 8]{Digne1991}.

%
%
%

In what follows, we fix a Galois automorphism $\sigma \in \mathcal{H}$ such that $\sigma$ maps every $p'$-root of unity $\zeta \in K^\times$ to $\zeta^{p^e}$. By Lemma \ref{galoisU} there exists some $\tilde{t} \in \tilde{T}$ such that $\phi_S^\sigma= \phi_S^{\tilde{t}}$ for every $S \subseteq \{1, \ldots, r \}$.

\begin{lemma}\label{global}
For every $\chi \in \Irr_{p'}(\tilde{G})$ there exists a character 
$\chi_0 \in \Irr(G \mid \chi)$ which satisfies
	$$(\tilde{G} D \langle \sigma \rangle )_{\chi_0}=\tilde{G}_{\chi_0} D_{\chi_0} \langle F_{p^e} \sigma^{-1} \tilde{t} \rangle$$.
\end{lemma}

\begin{proof}
	According to the proof of \cite[Remark 3.4]{Spaeth} for every $\chi \in \Irr_{p'}(\tilde{G})$ there exists a unique character $\chi_0 \in \Irr(G \mid \chi)$ with $(\chi_0,\D_{\G}(\Gamma_1)) = \pm 1$. By Remark \ref{field} we have $\chi^{F_{p^e}}=\chi^{\sigma}$. Since $\Gamma_1$ is $D$-stable and $\sigma^{-1} \tilde{t}$ fixes $\Gamma_1$ it follows that $F_{p^e} \sigma^{-1} \tilde{t}$ fixes $\Gamma_1$. The duality functor $\D_\G$ commutes with Galois automorphisms and group automorphisms of $\G^F$. Hence, it follows that $F_{p^e} \sigma^{-1} \tilde{t}$ fixes $\D_\G(\Gamma_1)$. Moreover, as $\chi^{F_{p^e}}=\chi^{\sigma}$ it follows that $\chi_0^{F_{p^e} \sigma^{-1} \tilde{t}}$ is below $\chi$. Thus, $\chi_0^{F_{p^e} \sigma^{-1} \tilde{t}}$ is the unique common constituent of $\chi_G$ and $\D_{\G}(\Gamma_1)$. Therefore, $\chi_0^{F_{p^e} \sigma^{-1} \tilde{t}}= \chi_0$. Hence, $\tilde{G}_{\chi_0} D_{\chi_0} \langle F_{p^e} \sigma^{-1} \tilde{t} \rangle$ is contained in $(\tilde{G} D \langle \sigma \rangle )_{\chi_0}$.

	Now suppose that $\chi_0^{\tilde{g} d \sigma^{-1} }=\chi_0$. This rewrites as $\chi_0^{\tilde{g} d \tilde{t}^{-1} F_{p^e}^{-1}}= \chi_0$. By the proof of \cite[Remark 3.4]{Spaeth} we have $(\tilde{G} D)_{\chi_0}= \tilde{G}_{\chi_0} D_{\chi_0}$.
It follows that $\tilde{g} d(\tilde{t}^{-1}) \in  \tilde{G}_{\chi_0}$ and $d F_{p^e}^{-1} \in D_{\chi_0}$. Therefore, $\tilde{g} d \sigma^{-1}  \in (\tilde{G}_{\psi} D_\psi) \langle F_{p^e} \sigma^{-1} \tilde{t} \rangle$ which completes the proof.
\end{proof}

\begin{lemma}\label{local}
For every $\psi\in\Irr_{p'}(\tilde{B})$ there exists a character $\psi_0\in\Irr(B\mid \psi)$ such that
$$(\tilde{B} D \langle \sigma \rangle )_{\psi_0}=\tilde{B}_{\psi_0} D_{\psi_0} \langle F_{p^e} \sigma^{-1} \tilde{t} \rangle.$$
\end{lemma}

\begin{proof}
	Let $\psi \in \Irr(\tilde{B} \mid \phi_S)$. By Clifford correspondence there exists a unique character $\lambda \in \Irr( \tilde{B}_{\phi_S} \mid \phi_S)$ such that $\lambda^{\tilde{B}}= \psi$. Denote $I:=B_{\phi_S}$ and define $\psi_0:= \lambda_I^B$.
%
	By Remark \ref{field} we have $\psi^{F_{p^e}}=\psi^{\sigma}$. Since $\phi_S$ is $F_p$-stable and $\sigma^{-1} \tilde{t}$ fixes $\phi_S$ it follows that $F_{p^e} \sigma^{-1} \tilde{t}$ fixes $\phi_S$. Consequently, $\lambda^{F_{p^e} \sigma^{-1} \tilde{t}} \in \Irr( \tilde{B}_{\phi_S} \mid \phi_S)$ is the Clifford correspondent of $\psi$. From this it follows that $\lambda^{F_{p^e} \sigma^{-1} \tilde{t}}= \lambda$. This implies that the character $\psi_0=\lambda_I^B$ is $F_{p^e} \sigma^{-1} \tilde{t}$-stable as well. Hence, the right-hand side is a subset of the left-hand side. By the proof of \cite[Remark 3.6]{Spaeth} the character $\psi_0$ satisfies $(\tilde{B} D)_{\psi_0}=\tilde{B}_{\psi_0} D_{\psi_0}.$ The converse can now be proved verbatim as in Lemma \ref{global}.
%
%
%
\end{proof}

\subsection{An equivariant bijection for the Galois--McKay conjecture}\label{mainsection}

In the proof of the following theorem we closely follow the proof of \cite[Theorem 2.10]{Spaeth}. We set $\mathcal{B}:=\tilde{B}^F \rtimes D$.

\begin{theorem}\label{main}
	There exists an $\mathcal{H} \times \mathcal{B}$-equivariant bijection
	\begin{align*}
	f:\Irr_{p'}(B)\to \Irr_{p'}(G).
	\end{align*}
	
	\begin{proof}
		The groups $\mathcal{H} \times \mathcal{B}$ and $\Irr(\tilde{B}/B)$ both act on $\Irr_{p'}(\tilde{B})$. We let $\mathcal{T}$ be a transversal in $\Irr_{p'}(\tilde{B})$ with respect to these combined actions. For every $\psi \in \mathcal{T}$ we fix a character $\psi_0 \in \Irr( B \mid \psi)$ with the properties from Lemma \ref{local} and let $\mathcal{T}_0 \subseteq \Irr_{p'}(B)$ be the set formed by these. This is an $\mathcal{H} \times \mathcal{B}$--transversal in $\Irr_{p'}(B)$.
		
		The bijection $\tilde{f}$ is $(\mathcal{H} \times \mathcal{B}) \ltimes \Irr(\tilde{B}/B)$-equivariant by \cite[Theorem 3.5]{Spaeth} and Theorem \ref{sigma}.
		It follows that the set $\tilde{f}(\mathcal{T})$ is a transversal in $\Irr(\tilde{G}^F)$ with respect to the $(\mathcal{H} \times \mathcal{B}) \ltimes \Irr(\tilde{B}/B)$-action. For every $\chi \in \tilde{f}(\mathcal{T})$ we fix a character $\chi_0\in \Irr(G \mid \chi)$ satisfying the properties of Lemma \ref{global}. Denote by $\mathcal{T}_0'$ the set formed by these characters.
		
		For $\psi\in \mathcal{T}$ and $\psi_0\in \mathcal{T}_0$ we define $f(\psi_0):=\chi_0$, where $\chi_0$ is the unique element in $\mathcal{T}_0' \cap\Irr(G \mid \tilde{f}(\chi))$. By Lemma \ref{local} and Lemma \ref{global} we have $ (\mathcal{H} \times \mathcal{B})_{\psi_0}=(\mathcal{H} \times \mathcal{B})_{\chi_0}$ for every $\psi_0 \in \mathcal{T}_0$ and $\chi_0=f(\psi_0)$. We can therefore extend $f$ to an $\mathcal{H} \times \mathcal{B}$--equivariant bijection $f:\Irr_{p'}(B)\to \Irr_{p'}(G)$ 
		by setting $$ f(\psi_0^x):= f(\psi_0)^x\text{ for every } x \in \mathcal{H} \times \mathcal{B} \text{ and } \psi_0 \in \mathcal{T}_0. \qedhere $$
	\end{proof}

\end{theorem}

\ \\
\noindent
\textit{Proof of Theorem \ref{maintheorem}}: If $\G^F$ is not one of the groups excluded in Assumption \ref{spaethassumption} then Theorem \ref{main} yields an $\mathcal{H}$-equivariant bijection $f:\Irr_{p'}(B)\to \Irr_{p'}(G)$. Observe that $U$ is a Sylow $p$-subgroup of $G$ and $B= \mathrm{N}_G(U)$. By an inspection of \cite[Table 24.2]{Malle2011} we observe that $\Z(G)=1$ for all groups excluded in Assumption \ref{spaethassumption}. Thus, in this case Corollary \ref{trivialcenter} applies. \qed

%
%

%

\section{Group automorphisms and Galois automorphisms}\label{section6}

The overall aim of this section is to explicitly construct extensions of suitable $p'$-characters $\chi \in \Irr(G)$ to $GD_{\chi}$. It is known by \cite[Remark 3.4]{Spaeth} that every character $\chi$ extends to $GD_{\chi}$. Unfortunately, the proof given there is only an existence proof and little to no information is given about the extended characters. To prove the inductive Galois-McKay condition for $G$ we need to be able to explicitly compute the action of the stabilizer $( \mathcal{B}  \times \mathcal{H})_{\chi}$ on the extensions of the characters to $GD_\chi$. To do this, we extend the ideas of \cite{Spaeth} and combine them with the method of descending scalars. This yields more natural extensions of $p'$-characters.


\subsection{Gelfand--Graev characters}
We recall the construction of Gelfand--Graev characters for disconnected reductive groups. Let $\G$ be a connected reductive group and $\tau$ an automorphism of $\G$ commuting with $F$ which stabilizes the $F$-stable pair $(\T,\B)$. In addition, we assume that $\tau$ is a quasi-central automorphism (see \cite[Definition 1.15]{grnoncon}) and has finite order $k$. We consider the reductive group $\G \rtimes \langle \tau \rangle$ and extend $F$ to $\G \rtimes \langle \tau \rangle$ by defining $F(\tau):=\tau$. Recall that $\Gamma_1=\xi^{\G^F}$. We assume that $\tau$ stabilizes the character $\xi$. We let $\hat{\xi} \in \Irr(\U^F \rtimes \langle \tau \rangle)$ be the extension of $\xi$ defined by $\hat{\xi}(\tau):=1$ and denote $\hat{\Gamma}_1={\hat{\xi}}^{\G^F \rtimes \langle \tau \rangle}$. By Mackey's formula it follows that $\hat{\Gamma}_1$ is an extension of $\Gamma_1$. In particular, $\hat{\Gamma}_1$ is multiplicity free.

Let $\phi: \G \langle \tau \rangle \to \G \langle \tau \rangle$ be a bijective morphism which stabilizes $\T \langle \tau \rangle$ and commutes with $F$. Then $\phi$ restricts to an automorphism of $\G^F \langle \tau \rangle$. We denote by $\mathrm{Aut}_0(\G^F \langle \tau \rangle)$ the set of automorphisms of $\G^F \langle \tau \rangle$ obtained in this way.

 The following proposition is a generalization of \cite[Remark 3.4(c)]{Spaeth}.

\begin{proposition}\label{invariantextension}
	Assume the notation as above. Let $\chi \in \Irr(\G^F)$ be a $\tau$-invariant character such that $\D_\G(\chi)$ is a constituent of $\Gamma_1$. Then there exists an extension $\hat{\chi} \in \Irr( \G^F \rtimes \langle \tau \rangle)$ of $\chi$ such that $\hat{\chi}^y= \mu \hat{\chi}$ whenever $y \in \mathrm{Aut}_0(\G^F \langle \tau \rangle) \times \mathcal{G}$ and $\mu \in \Irr( \langle \tau \rangle )$ is a linear character of $p'$-order which satisfy $\hat{\Gamma}_1^y= \mu \hat{\Gamma}_1$.
\end{proposition}

\begin{proof}
	We apply an idea already present in \cite[Remark 3.4(c)]{Spaeth}.
	Denote $\psi:= \D_\G(\chi)$. Since the Gelfand--Graev character $\Gamma_1$ is multiplicity free it follows that there exists a unique extension $\hat{\psi}$ of $\psi$ with $(\hat{\psi}, \hat{\Gamma}_1)\neq 0$. As $\tau$ is a quasi-central automorphism of $\G$ we can apply \cite[Proposition 3.13]{grnoncon} and there exists an extension $\hat{\chi}$ of $\chi$ to $\G^F \rtimes \langle \tau \rangle$ which can be obtained from $\hat{\psi}$ using the isometric involutions $\D_{\G.\tau^j}$, $j=1,\ldots,k$, from \cite[Definition 3.10]{grnoncon}. The map $\D_{\G.\tau^j}$ is defined using Harish--Chandra induction and restriction in the reductive group $\G \rtimes \langle \tau \rangle$, hence it commutes with Galois automorphisms and group automorphisms of $\mathrm{Aut}_0(\G^F \langle \tau \rangle)$. Therefore,
	 $$\D_{\G.\tau^j}(\hat{\Gamma}_1)^y=\D_{\G.\tau^j}(\hat{\Gamma}_1^y)= \D_{\G.\tau^j}(\hat{\Gamma}_1 \mu)=\D_{\G.\tau^j}(\hat{\Gamma}_1 ) \mu,$$
	 where the last equality follows from the first sentence in the proof of \cite[Proposition 3.30]{grnoncon}.
	 Consequently, the character $\hat{\chi}$ satisfies $\hat{\chi}^y= \mu \hat{\chi}$.
\end{proof}

The last proposition can be refined as follows. Let $\chi \in \Irr(\G^F)$ be a $\tau$-invariant character such that $\D_\G(\chi)$ is a constituent of $\Gamma_1$. Since $\chi$ is $\tau$-invariant the character $\theta \in \Irr(Z(G) \mid \chi )$ is $\tau$-invariant as well. We denote by $\xi_\theta \in \Irr(U Z)$ the unique character of $UZ$ extending both $\theta$ and $\xi$. It follows that $\D_\G(\chi)$ is a constituent of $\Gamma_\theta$, where $\Gamma_\theta:=(\xi_\theta)^G$. Note that both $\theta$ and $\xi$ are $\tau$-stable. Therefore, $\xi_\theta$ can be extended to a character $\hat{\xi}_\theta$ of $UZ \langle \tau \rangle$ with $\hat{\xi}_\theta(\tau)=1$. We denote $\hat{\Gamma}_\theta:=(\hat{\xi}_\theta)^{G \langle \tau \rangle}$. The following proposition is then proved in the same way as Proposition \ref{invariantextension}. For completeness we will give a full proof here.

\begin{proposition}\label{invariantextension2}
	Assume the notation as above. Then there exists an extension $\hat{\chi} \in \Irr( \G^F \rtimes \langle \tau \rangle)$ of $\chi$ such that $\hat{\chi}^y= \mu \hat{\chi}$ whenever $y \in \mathrm{Aut}_0(\G^F \langle \tau \rangle) \times \mathcal{G}$ satisfies $\hat{\Gamma}_\theta^y= \mu \hat{\Gamma}_\theta$ for some linear character $\mu \in \Irr( \langle \tau \rangle )$ of $p'$-order. 
\end{proposition}

\begin{proof}
	Denote $\psi:= \D_\G(\chi)$. Since $\Gamma$ is multiplicity free it follows that $\Gamma_\theta$ is multiplicity free as well. It follows that there exists a unique extension $\hat{\psi}$ of $\psi$ with $(\hat{\psi}, \hat{\Gamma}_\theta)\neq 0$. As in the proof of Proposition \ref{invariantextension} we obtain an extension $\hat{\chi}$ of $\chi$ to $\G^F \rtimes \langle \tau \rangle$ which can be obtained from $\hat{\psi}$ using the isometric involutions $\D_{\G.\tau^j}$, $j=1,\ldots,k$. As in the proof of loc. cit. we obtain
	$\D_{\G.\tau^j}(\hat{\Gamma}_{\theta})^y=\D_{\G.\tau^j}(\hat{\Gamma}_{\theta} ) \mu$. Consequently, the character $\hat{\chi}$ satisfies $\hat{\chi}^y= \mu \hat{\chi}$.
\end{proof}

Note that the conclusions of Proposition \ref{invariantextension} and Proposition \ref{invariantextension2} remain true if we replace $\Gamma$ by any multiplicity-free character $\Gamma' \in  \Irr(\G^F)$ with the property that $\Gamma'=(\xi')^{\G^F}$ for a $\tau$-stable linear character $\xi' \in \Irr(\U^F)$.

\subsection{Action of automorphisms on regular characters}
Let us now assume again that $\G$ is a simple algebraic group of simply connected type. The results of the previous sections suggest that it is important to study the action of Galois automorphisms on Gelfand--Graev characters. We do this by refining the result of Lemma \ref{galoisU}.

\begin{lemma}\label{field of value}
Let $\sigma \in \mathcal{G}$ be a Galois automorphism.
	\begin{enumerate}[label=(\alph*)]
		\item Assume that $(\G,F)$ is untwisted. For every $\sigma \in \mathcal{G}$ there exists some $\tilde{t} \in \tilde{T}^{F_0}$ with $ \phi_S^{\tilde{t}}= \phi_S^\sigma$ for all $S \subseteq \{ 1, \dots, r \}$ and $\gamma(\tilde{t}) \tilde{t}^{-1} \in \mathrm{Z}(\tilde{G})$ for every graph automorphism $\gamma$ of $\G$.
	\item 	Assume that $\G$ is not of type $A_n$, $D_n$ if $n$ is odd and $p \neq 2$. For every $\sigma \in \mathcal{G}$ there exists some $\tilde{t} \in \tilde{T}^{D}$ with $ \phi_S^{\tilde{t}}= \phi_S^\sigma$ for all $S \subseteq \{ 1, \dots, r \}$.
		
	\end{enumerate}
\end{lemma}

\begin{proof}
	Let us first assume that $\G^F$ is not isomorphic to $D_4(q)$ or ${}^3 D_4(q)$.
	The automorphism $\sigma \in \mathcal{H}$ sends the element $\phi_0 \in \Irr((\mathbb{F}_{q^w},+))$ again to an element of $\Irr((\mathbb{F}_{q^w},+))$ of order $p$. Hence, there exists some $b \in \mathbb{F}_p^\times$ such that $\phi_0^\sigma(a)=\phi_0(ba)$ for all $a \in \mathbb{F}_{q^w}$. By \cite[Proposition 8.1]{Maslowski2010} the elements $s_i:=\omega_{i}^\vee(b) \in \Ttilde^{F_p}$, $i=1, \dots, n$, satisfy
	$$x_{\alpha_j}(u)^{s_j}=x_{\alpha_j}(b u) \text{ and } x_{\alpha_i}(u)^{s_j}=x_{\alpha_i}( u)$$
	for all $i,j=1,\dots,n$ and $u\in \mathbf{k}$. We define $\tilde{t}:= \prod_{i=1}^n s_i \in \Ttilde$ and observe that 
	$$ x_{j}(u)^{\tilde{t}}=x_{j}(b u)$$
	for all $j=1,\dots,r$ and $u\in \mathbb{F}_{q^w}$. Recall that for $S \subseteq \{1, \dots, r \}$ we have $\phi_S= \prod_{i \in S} \phi_i$, where $\phi_i(x_i(u))= \phi_0(u)$ for all $u \in \mathbb{F}_{q^{|A_i|}}$. The action of graph automorphisms on the $s_i$ is given by \cite[Corollary 9.2]{Maslowski2010}. From this it follows that $\gamma(\tilde{t}) \tilde{t}^{-1} \in \mathrm{Z}(\Gtilde)$ for every graph automorphism $\gamma$.
	This shows that $F(\tilde{t})\tilde{t}^{-1} \in \mathrm{Z}(\tilde{\G})$ which implies that $\tilde{t}$ normalizes the finite group $\U^F$. By the explicit description of the action of $\tilde{t}$ on the $x_i$'s we easily deduce that $\phi_S^{\tilde{t}}= \phi_S^\sigma$. This proves part (a).
	
	To show part (b) we show using a case-by-case analysis that there exists some $z \in \mathrm{Z}(\tilde{G})$ such that $\tilde{t} z$ is $D$-stable. If $\G$ is of type $D_n$ with even $n$ then $\tilde{t}$ is already $D$-stable.
	Now suppose that $\G$ is of type $A_n$ with $n$ even or of type $E_6$. Then we have $\gamma(\tilde{t}) \tilde{t}^{-1}=s_0^{-n}$, where $s_0$ is the generator of $\mathrm{Z}(\Gtilde)^{F_p}$ as in \cite[Proposition 8.1]{Maslowski2010}. It follows that the element $z\tilde{t}$ with $z:=s_0^{-\frac{n}{2}}$ is $D$-stable.

	Finally assume that $\G^F$ is isomorphic to $D_4(q)$ or ${}^3 D_4(q)$. By Lemma \ref{galoisU} there exists $\tilde{t} \in \tilde{T}$ such that $\xi^\sigma=\xi^{\tilde{t}}$. Let $X$ denote the group of graph automorphism of $\G$. Since $\xi$ is $X$-stable we have $\gamma(\tilde{t})\tilde{t}^{-1} \in \tilde{T}_\xi=T \mathrm{Z}(\tilde{G})$ for all $\gamma \in X$. Therefore, the image of $\tilde{t}$ in $ H^1(F,\mathrm{Z}(\G))$ is $X$-stable. Since $\mathrm{Z}(\G)^X=1$ we observe that $H^1(F,\mathrm{Z}(\G))^X=1$ and so $\tilde{t} \in T\mathrm{Z}(\tilde{G})$. Consequently, $\xi^\sigma=\xi$ and thus $\phi_S^\sigma=\phi_S$ for all $\sigma \in \mathcal{G}$.
	%
\end{proof}

The restriction on the type of the group of Lie type in Lemma \ref{field of value}(b) seems to be necessary in general. However, the following result is true in general.
%
\begin{lemma}\label{field of value: better}
If $\sigma \in \mathcal{G}$ is a Galois automorphism then there exists some $\tilde{t} \in \tilde{T}$ such that $\phi_S^{\tilde{t}}=\phi_S^{\sigma}$ for all $S \subseteq \{1,\dots, r\}$ and $d(\tilde{t}) \tilde{t}^{-1} \in \mathrm{Z}(\tilde{G})$ for all $d\in D$.
\end{lemma}

\begin{proof}
	In the untwisted case this is a consequence of Lemma \ref{field of value}(a). By Lemma \ref{field of value}(b) we can assume that $\G$ is not of exceptional type. Suppose now that $F$ is a twisted Frobenius endomorphism. If $q$ is a square then \cite[Theorem 1.8(i)]{TiepZalesski} shows that $\phi_S^\sigma=\phi_S$ for all $S$. Hence, the statement follows in this case. 
	
	If $q=p^f$ is not a square, i.e. $f$ is odd, and $F=F_q \gamma$, then $(F_p \gamma)^f=F$. Denote $F':=F_p \gamma$ and let $\tilde{t} \in \tilde{\T}$ the element constructed in Lemma \ref{field of value}. Then we have $\phi_S^\sigma=\phi_S^{\tilde{t}}$ and $F'(\tilde{t}) \tilde{t}^{-1} \in \mathrm{Z}(\Gtilde)$. Applying Lang's theorem to the Frobenius endomorphism $F': \mathrm{Z}(\Gtilde) \to \mathrm{Z}(\Gtilde)$ we observe that there exists some $z \in \mathrm{Z}(\Gtilde)$ such that $\tilde{t} z$ is $F'$-stable. We can replace $\tilde{t}$ by $\tilde{t} z$ and therefore assume that $\tilde{t} \in \Ttilde^{F'}=\tilde{T}^{F'}$. Furthermore, we still have $\phi_S^{\tilde{t}}= \phi_S^\sigma$ since we only changed $\tilde{t}$ by a central element. The same reasoning implies that we still have $\gamma(\tilde{t}) \tilde{t}^{-1} \in \mathrm{Z}(\tilde{G})$. This yields the statement of the lemma.
\end{proof}

\subsection{Descent of scalars}

In this section, we suppose that $F_0$ is a Frobenius endomorphism with $F_0^k=F$ for some integer $k$.
For any $F$-stable closed subgroup $\mathbf{H}$ of $\G$ we denote $$\underline{\mathbf{H}}:= \mathbf{H} \times F^{k-1}_0(\mathbf{H}) \times \ldots \times F_0(\mathbf{H}).$$
We consider the automorphism
$$\tau: \underline{\G} \to \underline{\G}$$
with $\tau(g_1,\dots,g_k)=(g_2,\dots,g_k,g_1)$.
The mapping $\mathbf{H} \mapsto \underline{\mathbf{H}}$ yields a bijection between closed $F$-stable subgroups of $\G$ and $\tau F_0$-stable closed subgroups of $\underline{\G}$. For any such $\mathbf{H}$, the projection map $\mathrm{pr}:\underline{\mathbf{H}} \to \mathbf{H}$ onto the first coordinate yields an isomorphism $\underline{\mathbf{H}}^{\tau F_0} \cong \mathbf{H}^F$. 

If $\phi: \G \to \G$ is a bijective morphism we denote by $\underline{\phi}: \underline{\G} \to \underline{\G}$ the bijective morphism given by $\underline{\phi}(g_1,\dots,g_k):=(\phi(g_1),\dots, \phi(g_k))$. For convenience we also write $F_0$ for the Frobenius endomorphism $\underline{F_0}: \underline{\G} \to \underline{\G}$. One easily verifies that the automorphism $\tau$ is a quasi-central automorphism of $\underline{\G}$ in the sense of \cite[Definition 1.15]{grnoncon}.

 We consider the non-connected reductive group $\underline{\G} \rtimes \langle \tau \rangle$ with Frobenius endomorphism $\tau F_0: \underline{\G} \to \underline{\G}$. One easily checks that $\mathrm{pr}$ induces an isomorphism 
 $$\underline{\G}^{\tau F_0} \rtimes \langle  \tau \rangle \cong \G^F \rtimes \langle  F_0 \rangle.$$

\subsection{Construction for twisted groups}

We generalize the construction of the previous section. This is essentially necessary for working with automorphisms of twisted groups.
We suppose now that $F_0$ is a Frobenius endomorphism with $F_0^k \rho=F$ for some integer $k$ and $\rho: \G \to \G$ a graph automorphism of order $l$ which commutes with $F_0$.
For any $F^{kl}$-stable closed subgroup $\mathbf{H}$ of $\G$ we denote $$\underline{\mathbf{H}}:= \mathbf{H} \times F^{kl-1}_0(\mathbf{H}) \times \ldots \times F_0(\mathbf{H}).$$
As before, we consider the automorphism
$$\tau: \underline{\G} \to \underline{\G}, \,  (g_1,\dots,g_{kl}) \mapsto (g_2,\dots,g_{kl},g_1)$$
Recall that for any such $\mathbf{H}$, the projection map $\mathrm{pr}:\underline{\mathbf{H}} \to \mathbf{H}$ onto the first coordinate yields an isomorphism $\underline{\mathbf{H}}^{\tau F_0} \cong \mathbf{H}^{F^{kl}}$. 
Let us consider the closed subgroup $$\underline{\G}_{\rho}:=\{ \underline{g} \in \underline{\G} \mid g_i=\rho(g_{k+i}) \text{ for } i=1,\dots k(l-1) \}$$
of $\underline{\G}$. In fact, one easily observes that $\underline{\G}_{\rho}$ is isomorphic to the $k$-fold product of $\G$. In particular, $\underline{\G}_{\rho}$ is a connected reductive group. For any subgroup $\mathbf{H}$ of $\underline{\G}$ we define $\mathbf{H}_\rho:=\mathbf{H} \cap \underline{\G}_\rho$.
The fundamental observation is the following:

\begin{lemma}
The projection map $\mathrm{pr}: \underline{\mathbf{G}}^{\tau F_0} \to \mathbf{G}^{F_0^{kl}}$ maps $\underline{\G}^{\tau F_0}_{\rho}$ to $\G^F$. Moreover, the projection map induces an isomorphism
	$$\underline{\G}_{\rho}^{\tau F_0} \rtimes \langle  \tau \rangle \cong \G^{F} \rtimes \langle F_0 \rangle.$$
\end{lemma}

\begin{proof}
	Assume first that $\underline{g}=(g_1,\dots,g_{kl}) \in \underline{\G}_\rho^{\tau F_0}$. Then we have $\underline{g}=(g_1,F_0^{kl-1}(g_1), \dots, F_0(g_1))$. Since $\underline{g} \in \underline{\G}_\rho^{\tau F_0}$ we have $g_1=\rho(g_{k+1}) =\rho(F_0^k(g_1))=F(g_1)$. In other words, $\mathrm{pr}(\underline{g})=g_1$ is $F$-stable. On the other hand, if $g \in \G^F$ then $\rho(F_0^k(g))=F(g)=g$ which shows that $\mathrm{pr}^{-1}(g)=(g,F_0^{kl-1}(g), \dots, F_0(g)) \in \underline{\G}^{\tau F_0}_{\rho}$. Therefore, $\underline{\G}^{\tau F_0}_{\rho} \cong \G^F$ as required. Furthermore, we know that $\mathrm{pr}$ induces an isomorphism 
	$$\underline{\G}^{\tau F_0} \rtimes \langle \underline{\rho} \tau \rangle \cong \G^{F^l} \rtimes \langle F_0 \rangle.$$
	This isomorphism then restricts to an isomorphism $\underline{\G}_{\rho}^{\tau F_0} \rtimes \langle  \tau \rangle \cong \G^{F} \rtimes \langle F_0 \rangle.$
\end{proof}

\subsection{Character-theoretic consequences}

We fix a character $\psi_0 \in \Irr_{p'}(B \mid \phi_S) \cap \mathcal{T}$ and denote $\chi_0:=f( \psi_0)$, where $f: \Irr_{p'}(B) \to \Irr_{p'}(G)$ is as in Theorem \ref{main}. For the remainder of this section we assume that $\chi_0$ is $F_0$-stable, where $F_0: \G \to \G$ is (as in the previous section) a Frobenius endomorphism which satisfies $F_0^k= \rho F$ for some graph automorphism $\rho$ of order $l$ which commutes with $F_0$. In particular, this implies that $\rho$ stabilizes the character $\chi_0$. Let $\theta \in \Irr(\mathrm{Z}(G))$ be the common central character of $\psi_0$ and $\chi_0$.

\begin{assumption}\label{tilde}
We suppose that for every $\sigma \in \mathcal{H}$ there exists an element $\tilde{t} \in \tilde{T}$ such that $d(\tilde{t}) \tilde{t}^{-1} \in \mathrm{Z}(G)$ for all $d \in D_{\chi_0}$ and $\xi^{\tilde{t}}= \xi^\sigma$. If $\G^F\cong D_4(q)$ we let $\tilde{t}=1$, which is possible by the proof of Lemma \ref{field of value}.
\end{assumption}

We remark that the existence of such an element has not been shown in general, see Lemma \ref{field of value}. Until the end of this section, we will assume that Assumption \ref{tilde} holds. We then define $x_{\sigma}:=F_{p^e} \sigma^{-1} \tilde{t}$.

Recall that $\xi_\theta \in \Irr(U \mathrm{Z}(G))$ denotes the common extension of $\xi \in \Irr(U)$ and the central character $\theta \in \Irr(\mathrm{Z}(G))$. The character $\xi_\theta$ has an extension $\hat{\xi}_\theta \in \Irr(U \mathrm{Z}(G) D_{\chi_0} )$ with $D_{\chi_0}$ in its kernel. Therefore, there exists a linear character $\mu \in \Irr(D_{\chi_0})$ (which possibly depends on the Galois automorphism $\sigma$ and $\theta$) such that $\hat{\xi}_\theta^{x_\sigma}=\hat{\xi}_\theta \mu$. Evaluating this equality at $d \in D_{\chi_0}$ yields 
$$\mu(d)= \theta^{F_{p^e} \sigma^{-1}}(d(\tilde{t}) \tilde{t}^{-1}).$$
Since $\theta \in \Irr(\mathrm{Z}(G))$ is a character of a group of $p'$-order we conclude that the linear character $\mu$ is always of $p'$-order.

\begin{proposition}\label{invariant extension}
Suppose Assumption \ref{tilde} and let $\mu \in \Irr(D_{\chi_0})$ be the character defined above. Then there exists an extension $\hat{\chi_0} \in \Irr(\G^F \langle F_0 \rangle)$ of $\chi_0$ which satisfies $\hat{\chi}_0^{x_\sigma} = \hat{\chi}_0 \mu$ for every $\sigma \in \mathcal{H}$.
\end{proposition}

\begin{proof}
	Let $\mathrm{pr}^\vee: \mathbb{Z} \Irr(\G^F) \to \mathbb{Z} \Irr(\underline{\G}_\rho^{\tau F_0})$ be the map induced by $\mathrm{pr}$. We let $\underline{\chi}_0:=\mathrm{pr}^\vee( \chi_0) \in \Irr_{p'}( \underline{\G}_\rho^{\tau F_0})$.
	 Observe that $(\underline{\U}\Z(\underline{\G}))_{\rho}^{\tau F_0}=\underline{\U}_\rho^{\tau F_0} \Z(\underline{\G})_{\rho}^{\tau F_0}$.
	 We let $\underline{\xi}_{\underline{\theta}} \in \Irr( (\underline{\U}\Z(\underline{\G}))_{\rho}^{\tau F_0} )$ be the character of $(\underline{\U}\Z(\underline{\G}))_{\rho}^{\tau F_0} $ which corresponds to $\xi_\theta$ under the isomorphism $(\underline{\U}\Z(\underline{\G}))_{\rho}^{\tau F_0}\cong \U^F \mathrm{Z}(\G)^F$ given by $\mathrm{pr}$. Define $\underline{\Gamma}_\theta:= \underline{\xi}_\theta^{\underline{\G}_\rho^{\tau F_0}}$, which is the character corresponding to $\Gamma_\theta$ under the isomorphism $\underline{\G}_\rho^{\tau F_0} \cong \G^F$ given by $\mathrm{pr}$. The projection map $\mathrm{pr}$ induces an isomorphism $\underline{\G}_\rho^{\tau F_0} \cong \G^F$ and maps the BN-pair $(\underline{\B}_\rho^{\tau F_0}, \mathrm{N}_{\underline{\G}_\rho^{\tau F_0}}(\underline{\T}_\rho) )$ of $\underline{\G}_\rho^{\tau F_0}$ to the BN-pair $(\B^F, \mathrm{N}_{\G^F}(\T))$ of $\G^F$. Since Alvis--Curtis duality depends only on the BN-pair structure of the group (see \cite[Section 8.2]{Carter1985}) we deduce that $\mathrm{D}_{\underline{\G}_\rho} \circ \mathrm{pr}^\vee = \mathrm{pr}^\vee \circ \mathrm{D}_{\G}$. 
		
		From this we deduce that $\mathrm{D}_{\underline{\G}_\rho}(\underline{\chi}_0)$ is a constituent of $\underline{\Gamma}_\theta$. Let $\phi: \G \to \G$ be the bijective morphism given by the action of $F_{p^e} \tilde{t}$. By Assumption \ref{tilde}, the morphism $\phi$ commutes with $\rho$. Therefore, we can consider the restriction of $\underline{\phi}$ to $\underline{\G}_\rho$, which we will denote by the same letter. Observe that $\underline{\Gamma}_\theta^{\underline{\phi} \sigma^{-1}}=\underline{\Gamma}_\theta \mathrm{pr}^\vee(\mu)$.
		The remarks after Proposition \ref{invariantextension2} show that there
		exists an extension $\hat{\underline{\chi}}_0 \in \Irr( \underline{\G}_\rho^{\tau F_0} \langle \tau \rangle )$ of $\underline{\chi}_0$ which satisfies $\hat{\underline{\chi}}_0^{\underline{\phi} \sigma^{-1}}=\hat{\underline{\chi}}_0 \mathrm{pr}^\vee(\mu)$. We conclude that the character $\hat{\chi_0}:=(\mathrm{pr}^\vee)^{-1}(\hat{\underline{\chi}}_0) \in \Irr_{p'}(\G^F \langle F_0 \rangle)$ is an extension of $\chi_0$ which satisfies $\hat{\chi}_0^{x_\sigma} = \hat{\chi}_0 \mu$.
\end{proof}

\subsection{Gluing extensions of characters}

We need the following elementary lemma about extending characters to semidirect products.

\begin{lemma}\label{direct product}
	Let $Y$ be a finite group and suppose that $X:=X_1 \times X_2$ acts on $Y$, where $X_1$, $X_2$ are abelian groups. Let $\chi \in \Irr(Y)$ be an irreducible character which extends to $Y \rtimes X$ and let $\chi_i$ be extensions of $\chi$ to $Y \rtimes X_i$ for $i=1,2$. Then the character $\chi$ has an extension $\tilde{\chi}$ to $Y \rtimes X$ such that whenever $z \in \mathcal{G} \times  \mathrm{N}_{\mathrm{Aut}(Y \rtimes X)}(YX_1,Y X_2)$ and $\mu \in \Irr(X)$ satisfy $ \chi_i^{z} =\mu_{X_i} \chi_i$ for $i=1,2$ then we have $ \tilde{\chi}^{z}= \mu \tilde{\chi}$.
\end{lemma}

\begin{proof}
	
	By assumption there exists an extension $\tilde{\chi}$ of $\chi$ to $Y \rtimes X$. By \cite[Lemma 13.21]{Digne1991} there exist linear characters $\lambda_i \in \Irr(Y \rtimes X_i / Y)$ such that $\chi_i= \lambda_i \tilde{\chi}_{Y \rtimes X_i}$. Let $\lambda \in \Irr(Y \rtimes X / Y)$ be the linear character defined by $\lambda((x_1,x_2)):= \lambda_1(x_1) \lambda_2(x_2)$ for $(x_1, x_2) \in X_1 \times X_2$. Define $\tilde{\chi}':= \lambda \tilde{\chi}$.
	
	The characters $\chi_i \in \Irr(Y \rtimes X_i \mid \chi )$ satisfy $\chi_i^{z } =\mu_{X_i} \chi_i$ by assumption. We claim that $(\tilde{\chi}')^{z}= \mu \tilde{\chi}'$. Since $\chi$ is $z$-invariant it follows that there exists some linear character $\mu' \in \Irr(X)$ such that $ (\tilde{\chi}')^{z }=\mu' \tilde{\chi'} $. To show that $\mu= \mu'$ it suffices to show that $\mu'_{X_i}= \mu_{X_i}$ for $i=1,2$. However $(\tilde{\chi}')_{Y \rtimes X_i} = \chi_i$ and therefore $ \chi_i \mu_{X_i}= \chi_i^{z}= \chi_i \mu_{X_i}$. Hence, by \cite[Lemma 13.21]{Digne1991} we have $\mu_{X_i}'=\mu_{ X_i}$. Consequently, we have $ (\tilde{\chi}')^{z }= \tilde{\chi'}$.
\end{proof}



\begin{corollary}\label{invariant global}
Suppose that Assumption \ref{tilde} holds. Then there exists an extension $\hat{\chi}_0 \in \Irr(G D_{\chi_0})$ of $\chi_0$ which satisfies $\hat{\chi}_0^{x_\sigma} = \hat{\chi}_0 \mu$ for every $\sigma \in \mathcal{H}$.
\end{corollary}

\begin{proof}
	Assume first that $D_{\chi_0}$ is cyclic. Then there exists a Frobenius endomorphism $F_0$ of $\G$ with $F_0^k=\gamma F$ for some integer $k$ and a (possibly trivial) graph automorphism $\gamma$ such that $F_0$ generates $D_{\chi_0}$. Then the claim follows immediately from Proposition \ref{invariant extension}. Let us now assume that $D_{\chi_0}$ is a non-cyclic abelian group. Consequently, there exists a field automorphism $F_0$ with $F_0^k=F$ for some integer $k$ and a graph automorphism $\gamma$ such that $D_{\chi_0}=\langle \gamma, F_0 \rangle$. By \cite[Remark 3.6]{Spaeth} the character $\chi_0$ extends to $G D_{\chi_0}$. Applying Proposition \ref{invariant extension} yields an extension of $\chi_0$ to $G \langle F_0 \rangle$. Furthermore, Proposition \ref{invariantextension} gives an extension of $\chi_0$ to $G \langle \gamma \rangle$. Now applying Lemma \ref{direct product} shows that the character $\chi_0$ has an extension to $GD_{\chi_0}$ which satisfies the required property.
	
	Finally assume that $D_{\chi_0}$ is non-abelian. By an inspection of the automorphism groups of groups of Lie type we easily see that $G\cong D_4(q)$. In this case, $\tilde{t}=1$ and consequently $\mu=1_{D_{\chi_0}}$. Let $D_1$ be the normal Sylow $3$-subgroup of $D_{\chi_0}$. An analysis of the subgroup structure of $D$ shows that there exists some abelian subgroup $D_2$ of $D_{\chi_0}$ such that $D_{\chi_0}=D_1 D_2$ and $D_1 \cap D_2= 1$. Since $F$ is untwisted we can write $D_i=\langle \gamma_i,F_{0,i} \rangle$ where the $\gamma_i$'s are (possibly trivial) graph automorphisms and $F_{0,i}^{k_i}=F$ for some $k_i$'s. We can therefore apply the arguments from before to the groups $D_i$ and conclude that there exists an $x_\sigma$-equivariant extension $\chi_i \in \Irr(G D_i)$ of $\chi_0$. Furthermore, using the arguments used in Proposition \ref{invariantextension} and Proposition \ref{invariant extension} one can show that the characters $(\chi_1)_{G \langle \gamma_1 \rangle}$ and $(\chi_1)_{{G \langle F_{0,1} \rangle}}$ are $D_2$-stable as well. By Lemma \ref{direct product}, we conclude that $\chi_1$ is $D_2$-stable. According to \cite[Lemma 2.11]{Spaeth} we obtain a character $\hat{\chi}_0 \in \Irr(G {D_{\chi_0}})$ such that the restriction of $\hat{\chi}_0$ to $GD_i$ is $\chi_i$. By \cite[Lemma 13.21]{Digne1991} there exists some $\lambda \in \Irr({D_{\chi_0}})$ such that $\hat{\chi}_0^{x_\sigma}=\hat{\chi}_0 \lambda$. Since $(\hat{\chi}_0)_{GD_i}=\chi_i$ is $x_\sigma$-invariant, we deduce that $\lambda_{D_i}=1_{D_i}$ whence $\lambda=1_{D_{\chi_0}}$. This proves that $\hat{\chi}_0$ is $x_\sigma$-stable.
\end{proof}


The local analogue of Corollary \ref{invariant global} is easier to prove.

\begin{proposition}\label{invariant extension local}
Suppose that Assumpton \ref{tilde} holds. Let $\psi \in \Irr_{p'}(\tilde{B}^F)$ and assume that $\tilde{t} \in \tilde{T}$ satisfies $d(\tilde{t})\tilde{t}^{-1} \in \mathrm{Z}(G)$ for all $d \in D_{\chi_0}$. Then the character $\psi_0 \in \Irr(B \mid \psi)$ constructed in the proof of Lemma \ref{local} has an extension $\hat{\psi}_0 \in \Irr(B D_\psi )$ which satisfies $\hat{\psi}_0^{x_\sigma} = \hat{\psi}_0 \mu$ for every $\sigma \in \mathcal{H}$.
\end{proposition}

\begin{proof}
 Let us briefly recall the construction of $\psi_0$ from Lemma \ref{local}. By Clifford correspondence there exists a unique character $\lambda \in \Irr( \tilde{B}_{\phi_S} \mid \phi_S)$ such that $\lambda^{\tilde{B}}= \psi$. Then $\psi_0 \in \Irr(B \mid \phi_S)$ is defined as $\psi_0= (\lambda_{B_{\phi_S}})^B$.
 
 Denote $E:= D_{\psi_0}$ and $I:=B_{\phi_S}$. By \cite[Proposition 8.4]{Maslowski2010}, the character $\phi_S$ is $E$-stable. Hence by Clifford correspondence, the character $\lambda_I$ is $E$-stable as well.
 
  The group $\tilde{B}_{\phi_S}/ \mathrm{Ker}(\phi_S)$ is abelian by the proof of \cite[Lemma 8.5]{Maslowski2010}. We deduce that $I/\mathrm{Ker}(\lambda_I)$ is abelian as well. Thus, we can consider $\lambda_I$ as a character of the abelian group $I/\mathrm{Ker}(\lambda_I)$. Since $I E / \mathrm{Ker}(\lambda_I)\cong I/\mathrm{Kern}(\lambda_I) \rtimes E$ is the semidirect product with an abelian normal factor we know that the $E$-stable character $\lambda_I$ extends to $I E$. In particular, there exists an extension $\hat{\lambda} \in \Irr( I E)$ of $\lambda_I$ with $E$ in its kernel. By Mackey's formula the character $\hat{\psi}_0:=\hat{\lambda}^{ B E}$ is an extension of the character $\psi_0$.
 
It remains to show that $\hat{\psi}_0$ satisfies the desired property. We have $\phi_S^\sigma=\phi_S^{\tilde{t}}$. In particular, the character $\phi_S$ is $x_{\sigma}$-stable. Since $\psi_0$ is $x_\sigma$-stable it follows that its Clifford correspondent $\lambda  \in \Irr( \tilde{B}_{\phi_S} \mid \phi_S)$ is $x_\sigma$-stable as well. Consequently, $\hat{\lambda}$ and $\hat{\lambda}^{x_\sigma}$ are both extensions of $\lambda_I$. Therefore, there exists a linear character $\mu' \in \Irr(E)$ such that $\hat{\lambda}^{x_\sigma}=\mu' \hat{\lambda}$. Since $\hat{\psi}=\hat{\lambda}^{ B E}$ is an extension of $\psi$, it follows that $\theta \in \Irr(\mathrm{Z}(G) \mid \hat{\lambda})$. Thus, evalutation of the equation $\hat{\lambda}^{x_\sigma}=\mu' \hat{\lambda}$ at $d \in E$ yields $\mu'(d)= \theta^{F_{p^e} \sigma^{-1}}(d(\tilde{t}) \tilde{t}^{-1})=\mu(d)$. It follows that $\mu=\mu'$ and therefore $\hat{\psi}_0^{x_\sigma} = \hat{\psi}_0 \mu$.
\end{proof}

\section{The inductive Galois--McKay condition}\label{section7}

\subsection{Projective representations}\label{proj rep}
In this subsection, we prove the inductive Galois--McKay condition for groups of Lie type in defining characteristic. For this we need to recall the statement of \cite[Lemma 1.4]{NSV}:

\begin{lemma}\label{function}
Let $X$ be a finite group and $Y \lhd X$. Let $\theta \in \Irr (Y)$ and assume that
$\theta^{g\sigma }=\theta$ for some $g\in X$ and
$\sigma \in \mathcal{G}$.
Let $\mathcal{P}$ be a projective representation of $X_\theta$
associated with $\theta$ with values in $\mathbb{Q}^{\mathrm{ab}}$ and factor set $\alpha$.
Then $\mathcal{P}^{g\sigma}$ is a projective representation
associated with $\theta$, with factor set $\alpha^{g\sigma}(x, y)=\alpha^g(x,y)^\sigma$ for $x,y \in X_\theta$.
In particular, there exists a unique function $$\mu_{g\sigma} : X_\theta \to K^\times$$
with $\mu_{g\sigma}(1)=1$, constant on cosets of $Y$
such that the projective representation $\mathcal{P}^{g\sigma}$ is similar to $\mu_{g\sigma} \mathcal{P}$.
\end{lemma}

Let $\tilde{X} \in \{ \tilde{G}, \tilde{B} \}$ and $X:= G \cap \tilde{X}$. Fix a character $\psi \in \Irr( \tilde{X})$ and let $\psi_0 \in \Irr(X \mid \psi )$ be the characters of $X$ considered in Lemma \ref{global} respectively Lemma \ref{local}. Denote by $\psi_1 \in \Irr(\tilde{X}_\psi \mid \psi_0)$ the Clifford correspondent of $\psi$. We assume without loss of generality that all linear representations are realized over the field $\mathbb{Q}^{\mathrm{ab}}$.

Let $\mathcal{D}$ be a representation affording $\psi_0$. Let $\mathcal{D}_1$ be the representation of $\tilde{X}_{\psi_0}$ affording $\psi$ and extending $\mathcal{D}$. Furthermore, by \cite[Remark 3.4]{Spaeth} and \cite[Remark 3.6]{Spaeth} there exists a representation $\mathcal{D}_2$ of $X D_{\psi_0}$ extending $\mathcal{D}$. For $i=1,2$ we let $\psi_i$ be the character of the representation $\mathcal{D}_i$. As in \cite[Lemma 2.11]{Spaeth} we consider the projective representation $\mathcal{P}$ of $(\tilde{X}D)_{\psi_0}$ defined by 
$$\mathcal{P}(\tilde{g} d):= \mathcal{D}_1(\tilde{g}) \mathcal{D}_2(d)$$
for $\tilde{g} \in \tilde{X}_{\psi_0}$ and $d \in X D_{\psi_0}$. We can now state the following lemma:

\begin{lemma}\label{compute mu}
	Let $y\in \mathcal{H} \times \mathrm{N}_{\mathcal{B}}(X D_{\psi_0} )$ with $ \psi_0^y= \psi_0$. Suppose that $\mu_1 \in \Irr( \tilde{X}_{\psi_0} /X)$ and $\mu_2 \in \Irr( X D_{\psi_0} / X)$ are such that $ \psi_i^y = \mu_i \psi_i$ for $i=1,2$. Then there exists an invertible matrix $M$ such that
	$$ \mathcal{P}^y( \tilde{g} d)= \mu_1(\tilde{g}) \mu_2(d) M \mathcal{P}(\tilde{g} d) M^{-1}$$ for all $\tilde{g} \in \tilde{X}_{\psi_0}$ and $d \in X D_{\psi_0}$.
\end{lemma}

\begin{proof}
	The character $\psi_0$ is $y$-stable, hence there exists an invertible matrix $M$ such that $ \mathcal{D}^y=M \mathcal{D} M^{-1}$. Note that by Schur's lemma the matrix $M$ is determined up to a scalar. Since $ \mathcal{D}_1^y$ is a second extension of $\mathcal{D}$ to $\tilde{X}_{\psi_0}$ there exists by \cite[Lemma 13.21]{Digne1991} an invertible matrix $S$ such that $\mathcal{D}^y_1= \mu_1 S \mathcal{D}_1 S^{-1}$. Hence, we obtain $ \mathcal{D}^y = S \mathcal{D} S^{-1}$. Consequently, $S= \lambda M$ for some scalar $\lambda \in K^\times$, so that we may assume that $S=M$. In particular, we have $\mathcal{D}^y_1= \mu_1 M \mathcal{D}_1 M^{-1}$.
	
	Moreover, note that $ \mathcal{D}^y_2$ is a second extension of $\mathcal{D}$ to $X D_{\psi_0}$. The same reasoning as above shows that $ \mathcal{D}^y_2= \mu_2 M \mathcal{D}_2 M^{-1}$. From this we deduce that $$ \mathcal{P}^y( \tilde{g} d)= \mu_1(\tilde{g}) \mu_2(d) M \mathcal{P}(\tilde{g} d) M^{-1}$$ for all $\tilde{g} \in \tilde{X}_{\psi_0}$ and $d \in X D_{\psi_0}$.
\end{proof}

\subsection{Verifying the inductive condition} We are now able to verify the inductive Galois--McKay condition for most groups of Lie type in defining characteristic.

\begin{theorem}\label{main2}
Suppose that $(\G,F)$ satisfies Assumption \ref{spaethassumption} and assume additionally that $\G$ is not of type $A_n$, $D_n$ for odd $n$. Assume that $F: \G \to \G$ is an untwisted Frobenius endomorphism such that $S:=\G^F/ \mathrm{Z}(\G^F)$ is a simple non-abelian group and $\G^F$ is its universal covering group. Then the inductive Galois--McKay condition from \cite[Definition 3.1]{NSV} holds for the group $S$ and $p$.
\end{theorem}

\begin{proof}
By Theorem \ref{main} there exists an $\mathcal{H} \times \mathcal{B}$-equivariant bijection $f:\Irr_{p'}(B)\to \Irr_{p'}(G)$. Let $\chi_0 \in \mathcal{T}_0$ and $\psi_0:=f(\chi_0)$. Fix a character $\chi \in \Irr(\tilde{G})$ lying above $\chi_0$ and set $\psi:=\tilde{f}( \chi)$. The definition of $\mathcal{H}$-character triples is given in \cite[Definition 1.5]{NSV}. By \cite[Lemma 2.3]{NSV} it suffices (in the language of said $\mathcal{H}$-character triples) to prove that 
$$(G \rtimes \mathrm{Aut}(G)_{B,{\chi_0^{\mathcal{H}} }} , G, \chi_0)_{\mathcal{H}} \geq_c ( B \rtimes \mathrm{Aut}(G)_{B,\psi_0^{\mathcal{H}} }, B, \psi_0)_{\mathcal{H}}.$$
According to \cite[Remark 3.4(a)]{Spaeth}, the group $\tilde{G} D$ induces all automorphisms of $\mathrm{Aut}(G)$. Hence, by \cite[Theorem 2.9]{NSV} it is enough to prove that
$$((\tilde{G}D)_{\chi_0^{\mathcal{H}} }, G, \chi_0)_{\mathcal{H}} \geq_c ( (\tilde{B}D)_{\psi_0^{\mathcal{H}} }, B, \psi_0)_{\mathcal{H}}.$$
According to Lemma \ref{field of value}, for every $\sigma \in \mathcal{G}$ there exists some $\tilde{t} \in \tilde{T}^{D}$ with $\phi_S^{\tilde{t}}= \phi_S^\sigma$ for all $S \subseteq \{ 1, \dots, r \}$. Therefore, Assumption \ref{tilde} is satisfied. We let $\mathcal{D}_2$ be a representation affording the extension of $\chi_0$ to $G D_{\chi_0}$ constructed in Proposition \ref{invariant extension}. We let $\mu_2 \in \Irr(D_{\chi_0})$ be the linear character such that $\chi_2^{x_\sigma}=\chi_2 \mu_2$. (In fact since $\tilde{t}$ is $D$-stable we have $\mu_2=1_{D_{\chi_0}}$ however we will not use this fact.) Furthermore, let $\mathcal{D}_1$ be a representation of $\tilde{G}_{\chi_0}$  affording the Clifford correspondent $\chi_1 \in \Irr(\tilde{G}_{\chi_0} \mid \chi_0 )$ of $\chi$. Similarly, let $\mathcal{D}'_2$ be a representation affording the extension of $\psi_0$ to $B D_{\psi_0}$ constructed in Proposition \ref{invariant extension local} and $\mathcal{D}'_1$ a representation of $\tilde{B}_{\psi_0}$ affording the unique character $\psi_1 \in \Irr(\tilde{B}_{\psi_0} \mid \psi_0 )$ with $\psi_1^{\tilde{B}}=\psi$. As in \ref{proj rep}, we let $\mathcal{P}$ and $\mathcal{P}'$ be the projective representations of $(\tilde{G}D)_{\chi_0}$ and $(\tilde{B} D)_{\psi_0}$ constructed with the linear representations $\mathcal{D}_i$ and $\mathcal{D}'_i$, $i=1,2$, respectively.

According to \cite[Theorem 1.1]{Spaeth} it suffices to prove that for every $a \in (\mathcal{H} \times \mathcal{B})_{\chi_0}$, the functions $\mu_a$
and $\mu'_a$ given by Lemma \ref{function} agree on $(\mathcal{H} \times \mathcal{B})_{\chi_0}$. By Lemma \ref{global} there exists some $\sigma \in \mathcal{H}$ such that $y:=a x_\sigma^{-1} \in (\tilde{B}D)_{\chi_0}$, where $x_\sigma=F_{p^e} \sigma^{-1} \tilde{t}$. By Lemma \ref{field of value}, the element $F_{p^e} \tilde{t}$ stabilizes $G D_{\chi_0}$. Let $\mu_1 \in \Irr(\tilde{G}_{\chi_0} / G)$ such that $ \chi_1^{x_\sigma} = \mu_1 \chi_1$. By Lemma \ref{compute mu} and Corollary \ref{invariant global} there exists an invertible matrix $M$ such that
$$\mathcal{P}^{x_\sigma}( \tilde{g} d)= \mu_1(\tilde{g}) \mu_2(d)  M \mathcal{P}(\tilde{g} d) M^{-1}$$ for all $\tilde{g} \in \tilde{G}_{\chi_0}$ and $d \in G D_{\chi_0}$. Since $y \in (\tilde{B} D)_{\chi_0}$ we obtain
$$ \mathcal{P}^y( \tilde{g} d)=  \mathcal{P}(y) \mathcal{P}( \tilde{g} d) \mathcal{P}(y)^{-1}$$
by \cite[Lemma 10.10(a)]{NavarroBook}. Therefore, we have
$$ \mathcal{P}^a(\tilde{g} d)= \mu_1^y( \tilde{g}) \mu_2^y(d) S \mathcal{P}(\tilde{g}d )S^{-1}$$
for the invertible matrix $S:=M \mathcal{P}(y)$. This shows that $\mu_a(\tilde{g} d)= \mu_1^y(\tilde{g}) \mu_2^y(d)$. By \cite[Theorem 3.5(b)]{Spaeth} or \cite[Corollary 3.20]{Ruhstorfer} it follows that $ \psi_1^{x_\sigma} = (\mu_1)_{\tilde{B}_{\psi_0}} \psi_1$.  By Lemma \ref{compute mu} and Proposition \ref{invariant extension local} there exists an invertible matrix $M'$ such that
$$\mathcal{P}'^{x_\sigma}( \tilde{g} d)= \mu_1(\tilde{g})  M' \mathcal{P}'(\tilde{g} d) M'^{-1}$$ for all $\tilde{g} \in \tilde{B}_\psi$ and $d \in B D_{\psi_0}$. The same calculation as in the global case now shows that $\mu'_a(\tilde{g} d)= \mu_1^y( \tilde{g}) \mu_2^y(d)$ for $\tilde{g}d \in (\tilde{B}D)_{\psi_0}$. 
Consequently, the functions $\mu_a$
and $\mu'_a$ agree on $(\mathcal{H} \times \mathcal{B})_{\psi_0}$.
%
%
\end{proof}


\subsection{An alternative approach to the inductive condition}

In the statement of Theorem \ref{main2} we needed to exclude some cases. This was essentially because the statement of Lemma \ref{field of value}(a) doesn't hold for these groups in general. However, using one of the main results of \cite{TiepZalesski} and the strategy of \cite{farrell2019fake} we can prove the inductive Galois--McKay condition in the remaining cases. The proof also highlights some of the difficulties when dealing with the inductive Galois--McKay condition instead of the inductive McKay condition.

\begin{theorem}\label{part two}
	Suppose that $\G$ is of type $A_n$ or $D_n$ for odd $n$. Assume that $F: \G \to \G$ is a Frobenius endomorphism such that $S:=\G^F/ \mathrm{Z}(\G^F)$ is a simple non-abelian group and $\G^F$ is its universal covering group. Then the inductive Galois--McKay condition from \cite[Definition 3.1]{NSV} holds for $S$ and $p$.
\end{theorem}

\begin{proof}
	We assume the notation of the proof of Theorem \ref{main2}. Arguing as in the proof of loc. cit. it suffices to prove that
	$$(G \rtimes \mathrm{Aut}(G)_{B,{\chi_0^{\mathcal{H}} }} , G, \chi_0)_{\mathcal{H}} \geq_c ( B \rtimes \mathrm{Aut}(G)_{B,\psi_0^{\mathcal{H}} }, B, \psi_0)_{\mathcal{H}}.$$
	If for every $\sigma \in \mathcal{H}$ there exists $\tilde{t} \in \tilde{T}^{D_{\chi_0}}$ such that $\xi^\sigma= \xi^{\tilde{t}}$ then this follows from the arguments given in Theorem \ref{main2}. We will now discuss which characters $\chi_0 \in \Irr_{p'}(G)$ are not covered by this argument. Note that by Lemma \ref{field of value}(b) we can assume that $p \neq 2$. Furthermore, if $q$ is a square then we have $\xi^\sigma= \xi$ by \cite[Theorem 1.8(i)]{TiepZalesski} and therefore we can also exclude this case.
	
	Suppose first that $D_{\chi_0}=\langle F' \rangle$ for some Frobenius endomorphism $F': \Gtilde \to \Gtilde$ which satisfies $(F')^s=F$ for some $s \geq 1$. By Lemma \ref{field of value: better} there exists some $\tilde{t} \in \tilde{T}$ such that $\xi^\sigma= \xi^{\tilde{t}}$ and $F'(\tilde{t}) \tilde{t}^{-1} \in \mathrm{Z}(\Gtilde^F)$. Applying Lang's theorem to the Frobenius map $F': \mathrm{Z}(\Gtilde) \to \mathrm{Z}(\Gtilde)$ we find $z \in \mathrm{Z}(\Gtilde)$ such that $\tilde{t}':=z \tilde{t} \in \Ttilde^{F'}= \tilde{T}^{D_{\chi_0}}$. Since we still have $\xi^\sigma= \xi^{\tilde{t}'}$ we see that the arguments of Theorem \ref{main2} also apply in this case.
	
Assume therefore now that no such Frobenius endomorphism exists. Since $D= \langle F_p, \gamma \rangle$ we can therefore assume that $\gamma \in D_{\chi_0}$, where $\gamma$ is the graph automorphism of order $2$ of $\G$. In the twisted case, this only works because we can assume that $q$ is not a square.
	 We let $\theta \in \Irr( \mathrm{Z}(G) \mid \chi_0)= \Irr( \mathrm{Z}(G) \mid \psi_0)$. Since $\theta$ is $\gamma$-stable it follows that $\theta \in \Irr( \mathrm{Z}(G)) $ satisfies $\theta^2=1_{\mathrm{Z}(G)}$.
	


	
	We will now recall the method introduced in \cite{farrell2019fake}. We denote by $\mathcal{L}: \Gtilde \to \Gtilde, \, g \mapsto g^{-1} F(g)$ the Lang map on $\Gtilde$. Let $E \subseteq \mathrm{Aut}(\mathcal{L}^{-1}( \mathrm{Z}(\G)))$ be the subgroup generated by $F_p$ and $\gamma$. The group $E$ acts by automorphisms on $\tilde{G}$ and so we can form the semi-direct product $\tilde{G} \rtimes E$, which generates all automorphisms of $G$. By \cite[Theorem 2.9]{NSV} we can equivalently prove that
	$$((\tilde{G} E)_{\chi_0^{\mathcal{H}} }, G, \chi_0)_{\mathcal{H}} \geq_c ( (\tilde{B}             E)_{\psi_0^{\mathcal{H}} }, B, \psi_0)_{\mathcal{H}}.$$
	By Lemma \ref{field of value: better} there exists some $\tilde{t} \in \tilde{T}$ with $d(\tilde{t}) \tilde{t}^{-1} \in \mathrm{Z}(\tilde{G})$ for all $d \in D$ and $\xi^\sigma= \xi^{\tilde{t}}$.
	 Observe that $\gamma$ acts by inversion on $\mathrm{Z}(\G)$ and hence also on $H^1(F, \mathrm{Z}(\G))$. We therefore have
	$$\tilde{t}_2 \tilde{t}_{2'}= \tilde{t}=\gamma(\tilde{t})= \tilde{t}_2^{-1} \tilde{t}_{2'}^{-1}$$
	in $H^1(F, \mathrm{Z}(\G))$. This shows that $(t_{2'})^2=1$ in $H^1(F, \mathrm{Z}(\G))$ and so $t_{2'}=1$ in $H^1(F,\mathrm{Z}(\G))$. By replacing $\tilde{t}$ with its $2$-part we may assume that $\tilde{t}$ is an element of $2$-power order.
	
	 Since $\tilde{\T}= \T \mathrm{Z}(\tilde{\G})$ we find some $z \in \mathrm{Z}(\tilde{\G})$, which we can assume to be of $2$-power order, such that $t:=z \tilde{t} \in \T$. In particular, $t$ is a $2$-element again. We observe that $\mathcal{L}(t)= \mathcal{L}(z) \in \mathrm{Z}(\tilde{\G}) \cap \G= \mathrm{Z}(\G)$ and $a(t) t^{-1} \in \mathrm{Z}(\G)$ for all $a \in E$. Furthermore, we have $\xi^\sigma= \xi^{t}$. We set $y_\sigma:=F_{p^e} \sigma^{-1} t \in \mathcal{L}^{-1}(\mathrm{Z}(\G)) E \times \mathcal{H}$ and $\pi:=F_{p^e} \sigma^{-1}$. We will now make the following important observation.

%
%
\begin{lemma}\label{hilf2}
To prove Theorem \ref{part two}	we can assume that $\mathcal{L}(\mathrm{Z}(\G))_2 \subseteq \mathrm{Z}(G)_2$. 
\end{lemma}

\begin{proof}
	Suppose that $\G^F=D_n( \varepsilon q)$ for odd $n$. If $|\mathrm{Z}(\G^F)| \in \{1,2 \}$ then we have $\phi_S^\sigma= \phi_S$ for all $S \subseteq \{1, \dots, r\}$ by \cite[Theorem 1.8(iv)]{TiepZalesski}. Hence, this case is already covered by the arguments in Theorem \ref{main2} and we can even assume that $\mathrm{Z}(\G)= \mathrm{Z}(\G^F)$.

Suppose now that $\G$ is of type $A$. The Lang map $\mathcal{L}: \mathrm{Z}(\G) \to \mathrm{Z}(\G)$ yields an isomorphism $\mathcal{L}(\mathrm{Z}(\G)) \cong \mathrm{Z}(\G) / \mathrm{Z}(G)$. Since $\mathrm{Z}(\G)$ is cyclic the claim is hence equivalent to $|\mathcal{L}(\mathrm{Z}(\G))|_2 \leq |\mathrm{Z}(G)|_2$. This is equivalent (since we assume that $p \neq 2$) to $\frac{n_2}{(n,q - \varepsilon )_2} \leq (n,q-\varepsilon)_2$. This is satisfied whenever $n_2 \leq (q-\varepsilon)_2$. If we assume that $n_2 > (q-\varepsilon)_2$ then we have $\mathrm{Z}(\tilde{G})_2 \subseteq \mathrm{Z}(G)$. Since $\tilde{t}$ is a $2$-element we conclude that $\gamma( \tilde{t}) \tilde{t}^{-1} \in \mathrm{Z}(\tilde{G})_2 \subseteq G$. Consequently, $d(\tilde{t}) \tilde{t}^{-1} \in \mathrm{Z}(G)$ for all $d \in D$. In this case the proof of Theorem \ref{main2} applies and we can conclude that the inductive Galois-McKay condition holds in that case.
\end{proof}
	
 	
The previous lemma implies that the element $t$ normalizes $BE$. Indeed, for $a \in E$ we have $\mathcal{L}(a(t) t^{-1}) \in \mathcal{L}(\mathrm{Z}(\G))_2 \subseteq \mathrm{Z}(G)$ and therefore $E^t=E$.

We will now first consider the local situation. Let $\hat{\lambda} \in \Irr(B_{\phi_S} E_{\chi_0} )$ be the unique extension of $\phi_S$ with $E_{\chi_0} \subseteq \mathrm{Ker}(\hat{\lambda})$. By Mackey's formula the character $\hat{\psi}_2:= \hat{\lambda}^{B E_{\chi_0} }$ is an extension of $\psi_2$. Since $\phi_S$ is $y_\sigma$-stable, we have $\hat{\lambda}^{y_\sigma}= \mu \hat{\lambda}$ for some linear character $\mu \in \Irr(E_{\chi_0})$. Therefore, for $a \in E_{\chi_0}$ we have 
		$$\mu(a)=  \hat{\lambda}^\pi(a(t)t^{-1})= \theta^\pi(a(t) t^{-1}).$$
		In particular, we have $\hat{\psi}_2^{y_\sigma}=\hat{\psi}_2 \mu$. Since $\theta^2=1_{\mathrm{Z}(G)}$, we deduce that the linear character $\mu$ satisfies $\mu^2=1_{E_{\chi_0}}$. Furthermore, this implies $\mu(a)=\theta(a(t) t^{-1})$.

	Let us now consider the global situation. We can write $E_{\chi_0}=\langle F_0, \gamma \rangle$, where $F_0: \G \to \G$ is a Frobenius endomorphism such that some power of $F_0$ is $F$ and $F_0(\tilde{t})=\tilde{t}$. This is always possible since $\tilde{t}$ is $F_p \gamma^{\frac{1-\varepsilon}{2}}$-stable by the proof of Lemma \ref{field of value: better} and $(F_p \gamma^{\frac{1-\varepsilon}{2}})^f=F$ since $q$ is odd. Consider the Gelfand--Graev character $\hat{\Gamma}_1:=\hat{\xi}^{ G \langle \gamma \rangle }$, where $\hat{\xi} \in \Irr(U \langle \gamma \rangle )$ is the unique extension of $\xi$ with $\hat{\xi}(\gamma)=1$. By the above calculations, we have $\hat{\xi}^{y_\sigma}=\hat{\xi} \mu$ and so $\hat{\Gamma}_1^{y_\sigma}= \mu \hat{\Gamma}_1$.
	We can consider $F_{p^{e}} t$ as a bijective morphism of $\G \rtimes \langle \gamma \rangle$ which stabilizes $\T \langle \gamma \rangle $. Since $\gamma(t)t^{-1} \in \mathrm{Z}(\G)$ it follows that the bijective morphism $F_{p^e}t: \G\langle \gamma \rangle \to \G \langle \gamma \rangle$ commutes with the Frobenius endomorphism $F$ of $\G \rtimes \langle \gamma \rangle$ and stabilizes $\T \langle \gamma \rangle$. By Proposition \ref{invariantextension} we obtain an extension $\chi_1 \in \Irr(G \langle \gamma \rangle)$ of $\chi_0$ to $\G^F \langle \gamma \rangle$, such that $\chi_1^{y_\sigma}= \mu \chi_1$.
	
	Denote $x_\sigma:=F_{p^e} \sigma^{-1} \tilde{t} \in \tilde{G}D \times \mathcal{H}$. Since $\tilde{t}$ is $F_0$-stable we obtain by Proposition \ref{invariant extension} an extension $\overline{\chi}_2$ of $\chi$ to $G \langle F_0|_{\G^F} \rangle$ which is $x_\sigma$-stable. Using the group epimorphism $G \langle F_0 \rangle \to G \langle F_0|_{\G^F} \rangle$ we can lift $\overline{\chi}_2$ to an extension $\chi_2$ of $G \langle F_0 \rangle$, with $F \in E$ in its kernel.
Since $y_\sigma=x_\sigma z$ we deduce that $\chi_2^{y_\sigma}=\chi_2^{z}$. For $x=ga$ with $g \in G$ and $a \in \langle F_0 \rangle$, we have
$$\chi_2^z(x)=\chi_2(a(z) z^{-1} x)= \theta(a(z) z^{-1} ) \chi_2(x)=\mu(a) \chi_2(x)$$
or in other words $\chi_2^{y_\sigma}= \mu \chi_2$. Now Lemma \ref{direct product} shows the existence of an extension ${\hat{\chi}}_2 \in \Irr(G E \mid \chi_0)$ which satisfies ${\hat{\chi}}_2^{y_\sigma}=\mu {\hat{\chi}}_2$.
	
	We are now ready to verify the inductive conditions. This is proved in a similar fashion as in Theorem \ref{main2}. Let $X \in \{ B, G \}$ and 
	$\vartheta_0 \in \{ \psi_0, \chi_0 \}$ be the corresponding character of $X$. We let $\mathcal{D}$ be a representation of $X$ affording $\vartheta_0$. There exists a representation $\mathcal{D}_1$ of $X_1:=\tilde{G}_{\chi_0} \cap X$, extending $\mathcal{D}$. We denote by $\hat{\vartheta}_1 \in \Irr(X_1/X)$ its character. Let $\mathcal{D}_2$ be a representation of $X_{2}:=X E_{\vartheta_0}$ affording the character $\hat{\vartheta}_2 \in \{\hat{\psi}_2 , \hat{\chi}_2 \}$ which extends the representation $\mathcal{D}$.
	 We consider the projective representation $\mathcal{P}$ of $X_1 X_2$ defined by 
	$$\mathcal{P}(x_1 x_2):= \mathcal{D}_1(x_1) \mathcal{D}_2(x_2)$$
	for $x_1 \in X_1$ and $x_2 \in X_2$.

			
 Let $\mu_1 \in \Irr(X_1 / X)$ such that $\hat{\chi}_1^{y_\sigma} = \mu_1 \hat{\chi}_1$.
Note that by \cite[Theorem 3.5(b)]{Spaeth} or \cite[Corollary 3.20]{Ruhstorfer} it follows that $ \hat{\psi}_1^{y_\sigma} = \mu_1 \psi_1$. We have $\mathcal{P}|_{\mathrm{Z}(\G) \langle F \rangle}= \theta \times 1_{\langle F \rangle} E_{\hat{\vartheta}_0(1)}$. Now \cite[Lemma 2.7]{farrell2019fake} implies that
$$((\tilde{G} E)_{\chi_0}, G, \chi_0 ) \geq_c ( (\tilde{B}              E)_{\psi_0} , B, \psi_0).$$
 We now show that the functions $\mu_a$ and $\mu_a'$ given by Lemma  \ref{function}
 for the representations $\mathcal{P}$ agree on $(\mathcal{H} \times X_1 X_2)_{\psi_0}$. By Lemma \ref{global} there exists some $\sigma \in \mathcal{H}$ such that $y:=a y_\sigma^{-1} \in X_1 X_2$.
  By Lemma \ref{compute mu} there exists an invertible matrix $M$ such that
			$$\mathcal{P}^{y_\sigma}( x_1 x_2)= \mu_1(x_1) \mu_2(x_2)  M \mathcal{P}(x_1 x_2) M^{-1}$$ for all $x_1 \in X_1$ and $x_2 \in X_2$. Since $y \in X_1 X_2$ we obtain
			$$ \mathcal{P}^y( x_1 x_2)=  \mathcal{P}(y) \mathcal{P}( x_1 x_2) \mathcal{P}(y)^{-1}$$
			by \cite[Lemma 10.10(a)]{NavarroBook}. Therefore, we have
			$$ \mathcal{P}^a(x_1 x_2)= \mu_1^y( x_1) \mu_2^y(x_2) S \mathcal{P}(\tilde{g}d )S^{-1}$$
			for the invertible matrix $S:=M \mathcal{P}(y)$. This shows that $\mu_a(x_1 x_2)= \mu_1^y( x_1) \mu_2^y(x_2)$. Therefore, the function $\mu_a$ is the same in the local and the global case. Consequently, we have
				$$((\tilde{G} E)_{\chi_0^{\mathcal{H}} }, G, \chi_0)_{\mathcal{H}} \geq_c ( (\tilde{B}             E)_{\psi_0^{\mathcal{H}} }, B, \psi_0)_{\mathcal{H}},$$
	 which finishes the proof.
%
\end{proof}

\end{document}